\newtheorem{assumption}{Assumption}
\let \mr=\mathrm
\let \b=\boldsymbol
\begin{document}

\title{Convergence of a finite element method  on  a Bakhvalov-type mesh   for a singularly perturbed convection--diffusion equation in 2D\thanks{This research is partially supported by National Natural Science Foundation of China (11771257,11601251).
}}

\titlerunning{Finite element method on Bakhvalov-type mesh}        

\author{Jin Zhang         \and
        Xiaowei Liu 
}


\institute{Jin Zhang \at
              School of Mathematics and Statistics, Shandong Normal University,
Jinan 250014, China\\
              \email{jinzhangalex@hotmail.com}           
           \and
           Corresponding author:  Xiaowei Liu \at
            School of Mathematics and Statistics, Qilu University of Technology (Shandong Academy of Sciences), Jinan 250353, China\\
            \email{xwliuvivi@hotmail.com}  
}

\date{Received: date / Accepted: date}

\maketitle

\begin{abstract}
A  finite element method of any order is applied  on a Bakhvalov-type mesh to solve a singularly perturbed convection--diffusion equation in 2D, whose solution exhibits exponential boundary layers.  
A uniform convergence of (almost) optimal order is proved by means of a carefully defined interpolant. 
\keywords{Singular perturbation\and Convection--diffusion equation \and Bakhvalov-type mesh \and Finite element method \and   Uniform convergence. }
\subclass{ 65N12  \and 65N30  \and 65N50}
\end{abstract}
\section{Introduction}
Consider the elliptic boundary value problem 
 \begin{equation}\label{eq:model problem}
\begin{split}
-\varepsilon\Delta u-\boldsymbol{b}\cdot \nabla u+cu=&f\quad\text{
in $\Omega=(0,1)^{2}$},\\
 u=&0 \quad \text{on $\partial\Omega$},
 \end{split}
 \end{equation}
where $\varepsilon\ll 1$ is a small positive parameter and $\boldsymbol{b}(x,y)=(b_{1}(x,y),b_{2}(x,y))^{T}$. The functions $b_{1}, b_{2}, c$ and $f$ are assumed to be smooth on~$\bar\Omega$.
We also assume  that
\begin{equation}\label{betabounds}
b_{1}(x,y)\ge \beta_{1}>0,\,b_{2}(x,y)\ge \beta_{2}>0, \, c(x,y)+\frac{1}{2}\nabla\cdot \b{b}(x,y)\ge \gamma>0\quad \text{on $\bar{\Omega},$}
\end{equation}
where $\beta_{1}$, $\beta_{2}$ and $\gamma$ are some constants. 
These conditions ensure that~\eqref{eq:model problem} has a unique solution
in $H^1_0(\Omega)\cap H^2(\Omega)$ for all $f\in L^2(\Omega)$  (see, e.g., \cite{Roo1Sty2Tob3:2008-Robust}). Because $\varepsilon$ is small, the problem is in general singularly perturbed and its solution  typically has exponential boundary layers
at $x = 0$ and $y = 0$ and a corner layer at $(0, 0)$. 

Layer phenomena appears in different kinds of problems, for example singularly perturbed problems, which is one of the important topics in scientific computing. If a priori knowledge of layers has been obtained from asymptotic analysis etc.,  different kinds of meshes could be designed  for uniform convergent numerical methods (see \cite{Roo1Sty2Tob3:2008-Robust,Mil1Rio2Shi3:2012-Fitted}).  Here ``uniform'' means that the convergence is independent of the singular perturbation parameter. Bakhvalov-type meshes are one of the most popular layer-adapted meshes and usually have better numerical performances  than Shishkin-type meshes---another popular layer-adapted meshes ( see \cite{Linb:2010-Layer}). 


However, it is far from mature for convergence theories of finite element methods on Bakhvalov-type meshes. 
One of the main reasons is that the standard Lagrange interpolant does not work for Bakhvalov-type meshes (see  \cite{Roos-2006-Error}). In \cite{Zhan1Liu2:2020-Optimal}, we gave a new idea for  convergence analysis on Bakhvalov-type meshes in 1D. Here we extend the analysis
to two dimensions.  The extension is not trivial, because we must pay attention to the construction of  the interpolant used in the case of 2D. This interpolant is carefully defined according to the characteristics of layer functions and the structures of Bakhvalov-type meshes. Besides, different from 1D case, we must pay attention to the homogenous Dirichlet boundary condition when the idea in  \cite{Zhan1Liu2:2020-Optimal} is applied to 2D case.  The interpolation errors are derived in a delicate way. Then   almost uniform convergence of optimal order  is proved for finite element methods. 

The rest of the paper is organized as follows. In Section 2 we describe the   assumptions
on the regularity of the solution, introduce a Bakhvalov-type mesh    and define a finite element method of any order.  Some  preliminary results for the subsequent analysis are also given in this section.  In Section 3 we construct and analyze an interpolant  to the solution  for   uniform convergence on  the Bakhvalov-type mesh. In Section 4 almost uniform convergence of optimal order is obtained by means of the interpolant  and careful analysis of the convective term in the bilinear form. 
In Section 5, numerical
results illustrate our theoretical bounds.

Denote by $\Vert \cdot \Vert_{ \infty,D }$  the norms in the Lebesgue space $L^{\infty}(D)$.   In $L^2(D)$, the inner product and the $L^2(D)$-norm are denoted by  $(\cdot,\cdot)_D$ and $\Vert \cdot \Vert_{D}$, respectively. In $H^1(D)$, the seminorms are denoted by $\vert \cdot \vert_1$. Here $D$ is any measurable subset of $\Omega$. When $D=\Omega$ we drop the subscript $D$ from the  notation for simplicity.
Throughout the article, all constants $C$   are independent of
$\varepsilon$ and the mesh parameter $N$ and may take different values in different formulas. 

%
%
%
%
\section{Decomposition of the solution, Bakhvalov-type mesh and finite element method}\label{sec:decomp} 
In this section we present a decomposition of the solution to \eqref{eq:model problem}, introduce  a Bakhvalov-type mesh  and define a finite element method. Some preliminary inequalities are also presented. In the subsequent analysis, let $k$ be a fixed integer with $k\ge 1$.

\subsection{Regularity of the solution}
We make the following assumption about the solution  $u$ to \eqref{eq:model problem}, which describes the layer structure of~$u$. This assumption is also used in~\cite{Fra1Lin2Roo3etc:2010-Uniform}.
 
\begin{assumption}\label{assumption:1}
The solution $u$ of \eqref{eq:model problem} can be decomposed as
\begin{subequations}
\begin{equation}\label{eq:decomposition}
u=S+E_{1}+E_{2}+E_{12},
\end{equation}
where $S$ is the smooth part of $u$, $E_1$  and $E_2$ are exponential layers along
the sides $x=0$ and $y=0$ of $\Omega$ respectively, while $E_{12}$ is an exponential corner layer at $(0,0)$.
Moreover,there exists a  constant $C$ such that for all $(x,y)\in \bar{\Omega}$ and $0\le i+j \le k+1$ one has
\begin{align}
\left|\frac{\partial^{i+j}S}{\partial x^{i}\partial y^{j}}(x,y) \right| &\le C, \label{eq:(2.1b)}\\
\left|\frac{\partial^{i+j}E_{1}}{\partial x^{i}\partial y^{j}}(x,y) \right| &\le C\varepsilon^{-i}e^{-\beta_{1}x/\varepsilon},\label{eq:(2.1c)}\\
\left|\frac{\partial^{i+j}E_{2}}{\partial x^{i}\partial y^{j}}(x,y) \right| &\le C\varepsilon^{-j}e^{-\beta_{2}y/\varepsilon},\nonumber\\
\left|\frac{\partial^{i+j}E_{12}}{\partial x^{i}\partial y^{j}}(x,y) \right| &\le C\varepsilon^{-(i+j)}e^{-(\beta_{1}x+\beta_{2}y)/\varepsilon}.\label{eq:(2.1e)}
\end{align}
\end{subequations}
\end{assumption}

\begin{remark}
For the case $k=1$, the existence of this decomposition of $u$ with the bounds on derivatives can be guaranteed by conditions on the data of the problem \eqref{eq:model problem} (see \cite{Linb1Styn2:2001-Asymptotic}). The arguments in \cite{Linb1Styn2:2001-Asymptotic} make this assumption with $k\ge 2$ credible if we impose  sufficient compatibility conditions on $f$ (see some explanations in \cite[Sect.7]{Stynes:2005-Steady}).
\end{remark}

\subsection{Bakhvalov-type mesh}
Bakhvalov mesh  first appeared in \cite{Bakhvalov:1969-Towards} and is graded in the layer. Its  applications require the solution of a nonlinear equation. To avoid this difficulty,  Bakhvalov-type meshes are proposed as approximations of  Bakhvalov mesh (see \cite{Linb:2010-Layer}).

Let $N$ be an even positive integer. We introduce a Bakhvalov-type mesh in the $x$-direction
$$
0=x_0<x_1<\ldots<x_{N-1}<x_N=1.
$$
To resolve the layer along $x=0$, the mesh is graded in $[x_0,x_{N/2}]$ and equidistant
in $[x_{N/2},1]$. The mesh points $x_i$ is defined by
\begin{equation}\label{B-mesh-1}
x_i=
\left\{
\begin{aligned}
&\frac{\sigma\varepsilon}{\beta_1}\varphi(t_i)\text{ with $t_i=i/N$}\quad
&&\text{for $i=0,1,\ldots,N/2$},\\
&1-(1-x_{N/2})2(N-i)/N \quad
&&\text{for $i=N/2+1,\ldots,N$},
\end{aligned}
\right.
\end{equation}
with $\sigma\ge k+1$ and $\varphi(t):=-\ln(1-2(1-\varepsilon)t)$. The parameter  $\sigma$  determines the smallness of the layer terms in $x_{N/2}$. This Bakhvalov-type mesh  is also analyzed in \cite{Roos-2006-Error}. 
In a similar way we define the mesh $\{ y_j \}_{j=0}^N$ along the $y$-direction except that we replace $\beta_1$ by $\beta_2$ in \eqref{B-mesh-1}. Then we obtain a tensor-product rectangular mesh $\mathcal{T}_N$ with mesh points $(x_i,y_j)$ (see Figure \ref{fig:B-mesh}).

\begin{assumption}\label{assumption-2}
Assume  that $\varepsilon\le 
N^{-1}$    in our analysis, as is not a restriction in practice. 
\end{assumption} 
 
\begin{figure}
\centering
\includegraphics[width=2.8in]{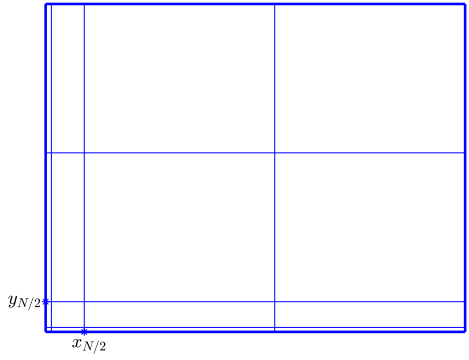}
\caption{Bakhvalov-type mesh $\mathcal{T}_{N}$.}
\label{fig:B-mesh}
\end{figure}
 
Moreover, assume 
$N\ge 4$ is an even integer. Set $h_{i,x}:=x_{i+1}-x_{i}$ and $h_{j,y}:=y_{j+1}-y_{j}$ for all~$i,j$. A mesh rectangle is often written as $\tau_{i,j}=[x_{i},x_{i+1}]\times [y_{j},y_{j+1}]$ for a specific element and more simply as $\tau$ for a generic mesh rectangle.

According to \cite[Lemma 3]{Zhan1Liu2:2020-Optimal}, we have the following  lemma.
\begin{lemma}\label{lem:Bakhvlov-mesh}
For  the Bakhvalov-type mesh  \eqref{B-mesh-1}, one has
\begin{align}
& h_{0,x}\le h_{1,x}\le \ldots \le h_{N/2-2,x},\label{eq:mesh-1}\\
&C\varepsilon N^{-1}\le h_{0,x}\le C\varepsilon N^{-1},\label{eq:mesh-2}\\
&\frac{1}{4}\sigma \varepsilon \le h_{N/2-2,x}\le \sigma \varepsilon,\label{eq:mesh-3}\\
&\frac{1}{2}\sigma \varepsilon \le h_{N/2-1,x}\le 2\sigma N^{-1},\label{eq:mesh-4}\\
&N^{-1}\le h_{i,x}\le 2N^{-1}\quad  N/2\le i \le N-1,\label{eq:mesh-5}\\
&x_{N/2-1}\ge C\sigma \varepsilon \ln N,\quad x_{N/2}\ge C\sigma \varepsilon |\ln \varepsilon |\label{eq:mesh-6},\\
 &h_{i,x}^{ \mu} e^{- \beta_1 x_i/\varepsilon}\le C\varepsilon^{\mu} N^{-\mu} 
\quad \text{for $0\le i\le N/2-2$  and $0\le \mu\le \sigma$}.
\label{eq:mesh-7}
\end{align}
For $h_{j,y}$, $0\le j <N$,  bounds   analogous to \eqref{eq:mesh-1}--\eqref{eq:mesh-7} also hold.
\end{lemma}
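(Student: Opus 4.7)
The whole lemma follows from a careful analysis of the generating function $\varphi(t)=-\ln(1-2(1-\varepsilon)t)$ on $[0,1/2]$. I would first compute $\varphi'(t)=2(1-\varepsilon)/(1-2(1-\varepsilon)t)$ and $\varphi''(t)=4(1-\varepsilon)^2/(1-2(1-\varepsilon)t)^2>0$, so $\varphi$ is strictly convex on $[0,1/2)$ with $\varphi(0)=0$ and $\varphi(1/2)=|\ln\varepsilon|$. Since the preimages $t_i=i/N$ are equispaced, convexity makes the differences $\varphi(t_{i+1})-\varphi(t_i)$ nondecreasing in $i$, which is exactly (\ref{eq:mesh-1}). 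The identity $x_{N/2}=(\sigma\varepsilon/\beta_1)|\ln\varepsilon|$ gives the second part of (\ref{eq:mesh-6}) directly, and (\ref{eq:mesh-5}) follows from the uniform rule on $[x_{N/2},1]$ together with Assumption~\ref{assumption-2}, which ensures $1-x_{N/2}\ge 1/2$.

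The boundary step sizes (\ref{eq:mesh-2})--(\ref{eq:mesh-4}) and the first inequality of (\ref{eq:mesh-6}) are obtained by direct evaluation of $\varphi$ at $t_0,t_1,t_{N/2-2},t_{N/2-1},t_{N/2}$. For (\ref{eq:mesh-2}) a Taylor expansion $-\ln(1-z)=z+O(z^{2})$ applied to $z=2(1-\varepsilon)/N$ yields $h_{0,x}\asymp 2\sigma\varepsilon/(\beta_1 N)$. For (\ref{eq:mesh-3}) and (\ref{eq:mesh-4}) the elementary computations
\[
1-2(1-\varepsilon)t_{N/2-2}=4/N+\varepsilon(1-4/N),\qquad 1-2(1-\varepsilon)t_{N/2-1}=2/N+\varepsilon(1-2/N),
\]
combined with $\varepsilon\le N^{-1}$, reduce $h_{N/2-2,x}$ and $h_{N/2-1,x}$ (divided by $\sigma\varepsilon/\beta_1$) to expressions of the form $\ln((A+O(\varepsilon N))/(B+O(\varepsilon N)))$ and $|\ln\varepsilon|-\ln(B+O(\varepsilon N))$ respectively, which are then pinched into the stated ranges using the standard fact that $s|\ln s|$ is bounded on $(0,1]$.

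The delicate inequality, and the step I would view as the main obstacle, is (\ref{eq:mesh-7}), since it couples mesh-size and exponential-decay information through the parameter $\sigma$. The crucial algebraic observation is
\[
e^{-\beta_1 x_i/\varepsilon}=e^{\sigma\ln(1-2(1-\varepsilon)t_i)}=\bigl(1-2(1-\varepsilon)t_i\bigr)^{\sigma}.
\]
By the mean value theorem and convexity, $h_{i,x}\le (\sigma\varepsilon/(\beta_1 N))\,\varphi'(t_{i+1})=C\varepsilon/[N(1-2(1-\varepsilon)t_{i+1})]$. For $0\le i\le N/2-2$, one has $1-2(1-\varepsilon)t_{i+1}\ge 2/N+\varepsilon(1-2/N)\ge 2/N$, and the shift $1-2(1-\varepsilon)t_i-(1-2(1-\varepsilon)t_{i+1})=2(1-\varepsilon)/N$ is no larger than the smaller term, which gives $1-2(1-\varepsilon)t_i\le 2\bigl(1-2(1-\varepsilon)t_{i+1}\bigr)$. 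Putting these facts together,
\[
h_{i,x}^{\mu}e^{-\beta_1 x_i/\varepsilon}\le C\varepsilon^{\mu}N^{-\mu}\bigl(1-2(1-\varepsilon)t_{i+1}\bigr)^{\sigma-\mu}\le C\varepsilon^{\mu}N^{-\mu}
\]
for $0\le\mu\le\sigma$, since $1-2(1-\varepsilon)t_{i+1}\le 1$. The analogous bounds in the $y$-direction follow by the same argument with $\beta_1$ replaced by $\beta_2$.
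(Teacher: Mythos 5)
Your proof is correct; note that the paper itself gives no proof of this lemma, merely citing Lemma~3 of \cite{Zhan1Liu2:2020-Optimal}, and your argument is the standard self-contained derivation via convexity of the generating function $\varphi$, with the key identity $e^{-\beta_1 x_i/\varepsilon}=(1-2(1-\varepsilon)t_i)^{\sigma}$ and the mean-value bound $h_{i,x}\le C\varepsilon N^{-1}(1-2(1-\varepsilon)t_{i+1})^{-1}$ handling \eqref{eq:mesh-7} exactly as in the cited reference. Two cosmetic caveats: the explicit constants in \eqref{eq:mesh-3}--\eqref{eq:mesh-4} actually carry a factor $\beta_1^{-1}$ that the paper's statement suppresses, and $1-x_{N/2}\ge 1/2$ requires $\sigma\varepsilon|\ln\varepsilon|/\beta_1\le 1/2$, which needs $\varepsilon$ sufficiently small rather than following verbatim from $\varepsilon\le N^{-1}$; neither affects the substance.
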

\subsection{Finite element method}
On the above Bakhvalov-type mesh, define the finite element space by 
\begin{equation*}\label{eq:VN}
V^{N}:=\{v^{N}\in C(\bar{\Omega}):\;\;v^{N}|_{\partial\Omega}=0
 \text{ and $v^{N}|_{\tau}\in \mathcal{Q}_k(\tau)$   }
  \;\forall\tau\in\mathcal{T}_{N} \},
\end{equation*}
where $\mathcal{Q}_k(\tau)=\mr{span}\{ x^i y^j:\; 0\le i,j\le k\}$.

The finite element method is defined as follows: Find $u^N\in V^{N}$ such that
\begin{equation}\label{eq: FE}
a(u^N,v^{N})=(f,v^{N})\quad \forall v^{N}\in V^{N},
\end{equation}
with
$
a(u^N,v^{N}):= \varepsilon(\nabla u^N,\nabla v^{N})+(-\boldsymbol{b}\cdot \nabla u^N+c u^N,v^{N}).
$
Condition \eqref{betabounds} implies the coercivity
\begin{equation}\label{eq:coercivity}
a(v^N,v^N) \ge \alpha \Vert v^N \Vert_{\varepsilon}^2\quad \text{for all $v^N\in V^N$},
\end{equation}
where  $\alpha=\min \{1,\gamma\}$ and
\begin{equation*}
\Vert v \Vert_{ \varepsilon}:=
\left\{\varepsilon \vert  v \vert^2_1+ \Vert v \Vert^2\right\}^{1/2} \quad \forall v\in H^1(\Omega).
\end{equation*} 
It follows that  there exists a unique solution $u^N$ for problem \eqref{eq: FE} from Lax-Milgram lemma.
Clearly, \eqref{eq:model problem} and \eqref{eq: FE} imply the Galerkin orthogonality property
\begin{equation}\label{eq:Galerkin orthogonality}
a(u-u^N,v^N)=0\quad \text{for all $v^N\in V^N$}.
\end{equation}


\section{Interpolation and its errors}
A new interpolation operator  is introduced for   uniform convergence. Set $x_{i}^{s}:=x_{i}+(s/k)h_{i,x}$ and $y_{j}^{t}:=y_{j}+(t/k)h_{j,y}$  for $i,j=0,1,\ldots,N-1$ and $s,t=0,\ldots,k-1$. For the consistency of notation, set $x_{N}^{0}=x_N$ and $y_{N}^{0}=y_N$.
  For any $v\in C^0(\bar{\Omega})$ its standard Lagrange interpolant $v^I\in V^{N}$ on the Bakhvalov-type mesh   can be written in the following form
\begin{align*}
v^I(x,y)=&\sum_{i=0}^{N-1} \sum_{s=0}^{k-1}\left( 
\sum_{j=0}^{N-1} \sum_{t=0}^{k-1} v(x_{i}^{s},y_{j}^{t})\theta_{i,j}^{s,t}(x,y)+v(x_{i}^{s},y_{N}^{0})\theta_{i,N}^{s,0}(x,y)\right)\\
&+\sum_{j=0}^{N-1} \sum_{t=0}^{k-1} v(x_{N}^{0},y_{j}^{t})\theta_{N,j}^{0,t}(x,y)+v(x_{N}^{0},y_{N}^{0})\theta_{N,N}^{0,0}(x,y),
\end{align*}
where  $\theta_{i,j}^{s,t}(x,y)\in V^{N}$ is the piecewise $k$th--order  hat function associated with the point  $(x_{i}^{s},y_{j}^{t})$.  For the solution $u$ to \eqref{eq:model problem}, recall \eqref{eq:decomposition} in Assumption \ref{assumption:1} and define the interpolant $\Pi u$ 
 by
\begin{equation}\label{eq:interpolant-u}
\Pi u=S^I+\pi_1 E_1+\pi_2 E_2+\pi_{12} E_{12}.
\end{equation}
Here $S^I$ is the Lagrange interpolant to $S$ and 
\begin{equation}\label{eq:interpolant-E-1}
\begin{aligned}
(\pi_i E_i)(x,y)=&E^I_i-\mathcal{P}_i E_i -\mathcal{B}_iE_i\quad\text{for $i=1,2$},\\
(\pi_{12} E_{12})(x,y)=&E^I_{12}-\mathcal{P}_{12} E_{12}
\end{aligned}
\end{equation}
where  
\begin{align}
\mathcal{P}_1 E_1=\sum_{i=N/2-1}\sum_{s=0}^{k-1}\left( 
\sum_{j=0}^{N-1} \sum_{t=0}^{k-1} E_1(x_{i}^s,y_j^t)\theta_{i,j}^{s,t}(x,y)+E_1(x_{i}^{s},y_{N}^{0})\theta_{i,N}^{s,0}(x,y)\right),\label{eq:P1E1}\\
\mathcal{P}_2 E_2=\sum_{j=N/2-1}\sum_{t=0}^{k-1}\left( 
\sum_{i=0}^{N-1} \sum_{s=0}^{k-1} E_2(x_{i}^{s},y_{j}^{t})\theta_{i,j}^{s,t}(x,y)+E_2(x_{N}^{0},y_{j}^{t})\theta_{N,j}^{0,t}(x,y)\right)\label{eq:P2E2},\\
\mathcal{P}_{12} E_{12}=\sum_{i=N/2-1}\sum_{j=N/2-1}\left(\sum_{s=0}^{k-1}\sum_{t=0}^{k-1} E_{12}(x_{i}^s,y_j^t)\theta_{i,j}^{s,t}(x,y) \right)\label{eq:P12E12}
\end{align}
and 
\begin{equation}\label{eq:B1+B2}
\begin{aligned}
\mathcal{B}_1E_1=\sum_{i=N/2-1}\sum_{s=0}^{k-1}\sum_{j=0,N}E_1(x_{i}^s,y_j^0)\theta_{i,j}^{s,0}(x,y),\\
\mathcal{B}_2E_2=\sum_{i=0,N}\sum_{j=N/2-1}\sum_{t=0}^{k-1}E_2(x_{i}^0,y_j^t)\theta_{i,j}^{0,t}(x,y).\\
\end{aligned}
\end{equation}
Clearly  we have
\begin{align}
&\Pi u\in V^N,\quad \Pi u=u^I-\sum_{i=1,2,12}\mathcal{P}_i E_i.\label{eq:pi-E-0} 
\end{align}
\begin{remark}
The definitions of interpolation operators $\pi_i$, $i=1,2,12$ arise  from layer functions  $E_i$, $i=1,2,12$ and the construction of Bakhvalov-type mesh \eqref{B-mesh-1}. 
The idea for operators $\mathcal{P}_1$, $\mathcal{P}_2$ and $\mathcal{P}_{12}$ is same to the operator $\mathcal{P}$ in \cite{Zhan1Liu2:2020-Optimal}, that is, new interpolations for layer functions  are zero at certain degrees of freedom. The operators $\mathcal{B}_1$ and $\mathcal{B}_2$ are introduced in order to maintain  homogenous Dirichlet boundary conditions.

\end{remark}

 From \cite[Theorem 2.7]{Apel:1999-Anisotropic}, we have the following anisotropic interpolation results.
 \begin{lemma}\label{lem:anisotropic-interpolation}
 Let $\tau\in\mathcal{T}_N$ and $v\in H^{k+1}(\tau)$.Then there exists a constant $C$ such that Lagrange interpolation $v^I$ satisfies
 \begin{align*}
\Vert  v-v^I \Vert_{\tau}\le & C\sum_{i+j=k+1}h^i_{x,\tau}h^j_{y,\tau} 
\left\Vert \frac{\partial^{k+1} v }{ \partial x^{i} \partial y^{j}  } \right\Vert_{\tau}\\
\Vert  (v-v^I)_x \Vert_{\tau}\le & C\sum_{i+j=k}h^i_{x,\tau}h^j_{y,\tau} 
\left\Vert \frac{\partial^{k+1} v }{ \partial x^{i+1} \partial y^{j}  } \right\Vert_{\tau}
\end{align*} 
and
$$
\Vert  (v-v^I)_y \Vert_{\tau}\le   C\sum_{i+j=k}h^i_{x,\tau}h^j_{y,\tau} 
\left\Vert \frac{\partial^{k+1} v }{ \partial x^{i} \partial y^{j+1}  } \right\Vert_{\tau},
$$
where $h_{x,\tau}$ and $h_{y,\tau}$ denote the lengths along $x-$axis and $y-$axis of the rectangle $\tau$, respectively.
\end{lemma}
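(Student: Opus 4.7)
\medskip
\noindent\textbf{Proof proposal.} The plan is to reduce to a reference-element estimate by an anisotropic affine change of variables, apply a tensor-product Bramble--Hilbert-type bound, and scale back. This is precisely the strategy behind \cite[Theorem~2.7]{Apel:1999-Anisotropic}, which I would invoke directly; the sketch below just indicates how to verify the stated bounds in the rectangular tensor-product setting of this paper.

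First I would map $\tau=[x_i,x_{i+1}]\times[y_j,y_{j+1}]$ onto $\hat\tau=[0,1]^2$ by $\hat x=(x-x_i)/h_{x,\tau}$, $\hat y=(y-y_j)/h_{y,\tau}$, and set $\hat v(\hat x,\hat y)=v(x,y)$. Since the Lagrange operator is affine-invariant and our nodes are tensor products, $\widehat{v^I}$ coincides with the $\mathcal{Q}_k$-interpolant $\hat v^{\hat I}$ of $\hat v$ at the corresponding nodes of $\hat\tau$. On the reference square the key ingredient is the anisotropic estimate
\[
\|\hat v-\hat v^{\hat I}\|_{L^2(\hat\tau)}\le C\sum_{i+j=k+1}\left\|\frac{\partial^{k+1}\hat v}{\partial\hat x^{\,i}\partial\hat y^{\,j}}\right\|_{L^2(\hat\tau)},
\]
and its analogues with one derivative in $\hat x$ or $\hat y$ applied to the left side. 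The classical Bramble--Hilbert lemma yields the cruder isotropic bound $C|\hat v|_{H^{k+1}(\hat\tau)}$, which will not survive the scaling back because $h_{x,\tau}$ and $h_{y,\tau}$ can be of very different sizes on a Bakhvalov-type mesh. Instead I would obtain the displayed estimate by a Deny--Lions argument applied direction-by-direction: since $\hat v^{\hat I}$ reproduces $\mathcal{Q}_k$, one may subtract a tensor-product polynomial $p(\hat x,\hat y)=\sum_{i,j\le k}a_{ij}\hat x^{\,i}\hat y^{\,j}$ without changing the interpolation error, and then bound $\hat v-p$ via iterated one-dimensional Taylor residues so that every remainder term involves exactly one of the mixed derivatives of order $k+1$.

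Finally, undoing the affine transformation introduces a factor $(h_{x,\tau}h_{y,\tau})^{1/2}$ on both sides of the $L^2$ estimate and replaces $\partial_{\hat x}^{\,i}\partial_{\hat y}^{\,j}\hat v$ by $h_{x,\tau}^{\,i}h_{y,\tau}^{\,j}\,\partial_x^{\,i}\partial_y^{\,j}v$, producing exactly the weighted sums in the statement. The two gradient estimates follow identically, with the reference-element bound
\[
\|\partial_{\hat x}(\hat v-\hat v^{\hat I})\|_{L^2(\hat\tau)}\le C\sum_{i+j=k}\left\|\frac{\partial^{k+1}\hat v}{\partial\hat x^{\,i+1}\partial\hat y^{\,j}}\right\|_{L^2(\hat\tau)}
\]
(and its $\hat y$-counterpart), and with the rescaling of $\partial_x$ contributing an extra $h_{x,\tau}^{-1}$ that absorbs one power of $h_{x,\tau}$ from the scaling of the right-hand side. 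The main obstacle is the anisotropic Bramble--Hilbert step: the standard proof based on the full $H^{k+1}$ seminorm must be replaced by the axis-by-axis argument recalled above, which is exactly what allows the subsequent layer analysis to exploit the small $h_{x,\tau}$ inside the layer without paying a penalty in the larger $h_{y,\tau}$.
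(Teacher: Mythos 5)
Your proposal is correct and matches the paper, which proves nothing itself but simply invokes \cite[Theorem 2.7]{Apel:1999-Anisotropic} for these anisotropic estimates. Your additional sketch of the scaling argument and the direction-by-direction Bramble--Hilbert step accurately describes what underlies Apel's theorem, but it is supplementary to what the paper actually does.
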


%
%
%

%
%
%
%
Set
\begin{align*}
\Omega_{11}:=[x_0,x_{N/2-1}]\times [y_0,y_{N/2-1}], \quad
\Omega_{12}:=[x_{N/2-1},x_N]\times [y_0,y_{N/2-1}],\\ 
\Omega_{21}:=[x_0,x_{N/2-1}]\times [y_{N/2-1},y_N], \quad
\Omega_{22}:=[x_{N/2-1},x_N]\times [y_{N/2-1},y_N].
\end{align*}
\begin{lemma}\label{lem:interpolation-error-1}
Let Assumptions \ref{assumption:1} and \ref{assumption-2} hold true.
Let $E^I_i$, $i=1,2$, denote the Lagrange interpolants of $
E_i$, $i=1,2$, respectively, on the Bakhvalov-type mesh $\mathcal{T}_N$. Then there exists a constant $C$ such
that the following interpolation error estimates hold true: 
\begin{align*}
&\Vert E_1-E^I_1 \Vert_{\Omega \setminus \big( (x_{N/2-1},x_{N/2})\times [0,1] \big) }
+
\Vert E_2-E^I_2 \Vert_{\Omega \setminus \big( [0,1]\times (y_{N/2-1},y_{N/2}) \big) }
\le C \varepsilon^{1/2}N^{-(k+1/2)}, \\ 
&\Vert E_1-E^I_1 \Vert+\Vert E_2-E^I_2 \Vert\le  CN^{-(k+1)},\\
&\Vert E_1-E^I_1 \Vert_{\varepsilon}+\Vert E_2-E^I_2 \Vert_{\varepsilon}\le C N^{-k},  \\
&\Vert  \mathcal{P}_1 E_1   \Vert_{\varepsilon}+\Vert  \mathcal{P}_2 E_2 \Vert_{\varepsilon}+\Vert  \mathcal{B}_1 E_1   \Vert_{\varepsilon}+\Vert  \mathcal{B}_2 E_2 \Vert_{\varepsilon}  \le CN^{1/2-\sigma}, 
\end{align*}
where $ \mathcal{P}_i E_i$, $i=1,2$, are defined in \eqref{eq:P1E1} and \eqref{eq:P2E2}, respectively.
\end{lemma}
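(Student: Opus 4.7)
The plan is to exploit the symmetry of the decomposition and handle only $E_1$ (the estimates for $E_2$ are analogous with $x$ and $y$ swapped, and the operators $\mathcal{P}_2, \mathcal{B}_2$ are treated by the same template). The domain is split in the $x$-direction into the fine (layer) region $\Omega_f := [0,x_{N/2-2}]\times[0,1]$, the transition element strip $\Omega_t := [x_{N/2-1},x_{N/2}]\times[0,1]$, and the coarse region $\Omega_c := [x_{N/2},1]\times[0,1]$. The mesh element $[x_{N/2-2},x_{N/2-1}]$ belongs naturally to $\Omega_f$. First I would prove the first inequality on $\Omega_f \cup \Omega_c$.

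On $\Omega_f$, I apply the anisotropic estimate from Lemma \ref{lem:anisotropic-interpolation} on each element $\tau_{l,j}$ with $0\le l\le N/2-2$. Since only $x$-differentiation of $E_1$ produces $\varepsilon^{-1}$ factors, the pointwise bound \eqref{eq:(2.1c)} gives
\[
h_{l,x}^{2i}h_{j,y}^{2(k+1-i)}\Bigl\Vert \tfrac{\partial^{k+1} E_1}{\partial x^i \partial y^{k+1-i}}\Bigr\Vert_{\tau_{l,j}}^2
\le C\,\varepsilon\, h_{j,y}^{2(k+1-i)+1}\bigl(h_{l,x}^i\varepsilon^{-i}e^{-\beta_1 x_l/\varepsilon}\bigr)^2,
\]
and (2.7) collapses the last factor to $C N^{-2i}$. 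Using $h_{j,y}\le C N^{-1}$ uniformly in $j$, the sum $\sum_j h_{j,y}^{2(k+1-i)+1}\le C N^{-2(k+1-i)}$ and summing over the $O(N)$ values of $l$ yields the total $C\varepsilon N^{-(2k+1)}$, whose square root is the desired $\varepsilon^{1/2}N^{-(k+1/2)}$. On $\Omega_c$ I use the exponential smallness: $|E_1|+|E_1^I|\le C e^{-\beta_1 x_{N/2}/\varepsilon}=C\varepsilon^\sigma\le C N^{-\sigma}$, which is absorbed since $\sigma\ge k+1$. For the second inequality, on the transition strip $\Omega_t$ I apply Lemma \ref{lem:anisotropic-interpolation} again using $h_{N/2-1,x}\le 2\sigma N^{-1}$, combined with the pointwise bound $e^{-\beta_1 x/\varepsilon}\le C N^{-\sigma}$ coming from \eqref{eq:mesh-6}; this contributes at most $C N^{-\sigma}\cdot N^{-1/2}$, dominated by $N^{-(k+1)}$.

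For the energy-norm bound, the same splitting is used, with the anisotropic estimates applied to $(E_1-E_1^I)_x$ and $(E_1-E_1^I)_y$. The worst contribution comes from $\Vert(E_1-E_1^I)_x\Vert$ on $\Omega_f$; a calculation parallel to the $L^2$ case (now the power of $h_{l,x}/\varepsilon$ decreases by one) gives a bound of order $N^{-k}$, and when multiplied by $\sqrt{\varepsilon}$ stays at that order; the $y$-derivative term and the $\Omega_t,\Omega_c$ contributions are of lower order. Finally, for $\mathcal{P}_1 E_1$ and $\mathcal{B}_1 E_1$, I observe from \eqref{eq:P1E1}, \eqref{eq:B1+B2} that both are supported on the two elements adjacent to $x=x_{N/2-1}$, and all their nodal values have the form $E_1(x_{N/2-1}^s,\cdot)$, hence are bounded by $e^{-\beta_1 x_{N/2-1}/\varepsilon}\le C N^{-\sigma}$ via \eqref{eq:mesh-6}. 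Standard scaling of the hat functions $\theta_{N/2-1,j}^{s,t}$ on rectangles of size $h_{N/2-1,x}\times h_{j,y}$ gives $\Vert\theta\Vert_{L^2}\sim (h_{N/2-1,x}h_{j,y})^{1/2}$ and $\Vert\theta_x\Vert_{L^2}\sim (h_{j,y}/h_{N/2-1,x})^{1/2}$; summing over $j$, using $\sum_j h_{j,y}=1$ and $h_{N/2-1,x}\ge \sigma\varepsilon/2$ from \eqref{eq:mesh-4}, yields $\Vert\mathcal{P}_1 E_1\Vert\le C N^{-\sigma}$ and $\sqrt{\varepsilon}|\mathcal{P}_1 E_1|_1\le C N^{-\sigma}\varepsilon^{1/2}\cdot\varepsilon^{-1/2}=CN^{-\sigma}$; the extra $N^{1/2}$ in the claimed $N^{1/2-\sigma}$ accommodates the potentially sharper tracking of the constant in \eqref{eq:mesh-6}, and the $\mathcal{B}_1$ term is handled identically since it only involves the two boundary rows $j=0,N$.

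The main obstacle is the bookkeeping in the first step: one must keep the exponential-decay factor inside the square before invoking \eqref{eq:mesh-7}, and simultaneously exploit $h_{j,y}\le CN^{-1}$ uniformly (across graded and uniform $y$-subregions) to make the $j$-sum absorb the factor $N$ coming from the cardinality of the $l$-sum, so that exactly one power of $\varepsilon$ survives to produce the $\varepsilon^{1/2}$ gain on the region away from the transition strip. The transition strip is precisely the element where \eqref{eq:mesh-7} is not available, which is why it must be excluded in the first inequality and then treated separately (through the exponential smallness at $x_{N/2-1}$) for the subsequent bounds.
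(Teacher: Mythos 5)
Your treatment of the transition strip $\Omega_t=[x_{N/2-1},x_{N/2}]\times[0,1]$ is where the argument breaks down, and this is precisely the crux of Bakhvalov-type meshes. For the $L^2$ bound you propose to apply Lemma \ref{lem:anisotropic-interpolation} on $\Omega_t$ using $h_{N/2-1,x}\le 2\sigma N^{-1}$ together with $e^{-\beta_1 x/\varepsilon}\le CN^{-\sigma}$; but the leading term is then $h_{N/2-1,x}^{k+1}\Vert\partial_x^{k+1}E_1\Vert\le C(N^{-1})^{k+1}\varepsilon^{-(k+1)}\cdot\varepsilon^{1/2}N^{-\sigma}=C(N\varepsilon)^{-(k+1)}\varepsilon^{1/2}N^{-\sigma}$, and the factor $(N\varepsilon)^{-(k+1)}$ is unbounded under Assumption \ref{assumption-2}. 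The same failure is fatal for the energy norm: on $\Omega_t$ the estimate $\varepsilon^{1/2}h_{N/2-1,x}^{k}\Vert\partial_x^{k+1}E_1\Vert$ is at best of order $|\ln(N\varepsilon)|^{k}N^{-\sigma}$ even with the sharp $h_{N/2-1,x}\le C\varepsilon|\ln(N\varepsilon)|$, which is not uniformly $O(N^{-k})$ --- this is exactly the observation of \cite{Roos-2006-Error} that motivates the whole paper, so the $\Omega_t$ contribution is not ``of lower order.'' The paper avoids Lemma \ref{lem:anisotropic-interpolation} on $[x_{N/2-1},1]\times[0,1]$ entirely: for the $L^2$ bound it uses the triangle inequality with the direct bounds $\Vert E_1\Vert_{D_0}\le C\varepsilon^{1/2}N^{-\sigma}$ and $\Vert E_1^I\Vert_{D_0}\le CN^{-\sigma-1/2}$ (stability of the interpolant from the smallness of the nodal values, not approximation), and for the energy norm it combines the triangle inequality with an inverse inequality, where the lower bound $h_{N/2-1,x}\ge\sigma\varepsilon/2$ from \eqref{eq:mesh-4} is what makes $\varepsilon\, h_{N/2-1,x}^{-2}\Vert E_1^I\Vert^2$ controllable. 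You need this route, not the interpolation-theoretic one.

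There is a second, smaller gap in your bound for $\Vert\mathcal{P}_1E_1\Vert_\varepsilon$: you only account for $\Vert\theta\Vert$ and $\Vert\theta_x\Vert$ and conclude $CN^{-\sigma}$, attributing the extra $N^{1/2}$ in the statement to slack in the constants. In fact the dominant contribution is the $y$-derivative: $\varepsilon\Vert\theta_y\Vert^2_{\tau_{N/2-1,j}}\sim\varepsilon\, h_{N/2-1,x}h_{j,y}^{-1}$, and since $h_{j,y}$ can be as small as $C\varepsilon N^{-1}$ near $y=0$, summing over $j$ (using $h_{j,y}^{-1}\le C\varepsilon^{-1}N$) produces exactly the factor $N$ that gives $\Vert\mathcal{P}_1E_1\Vert_\varepsilon^2\le CN^{1-2\sigma}$, as in \eqref{eq:B5}. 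The fine region $\Omega_f$ and the coarse region $\Omega_c$ are handled essentially as in the paper (your $\Omega_c$ absorption should be run from $\varepsilon^{\sigma}\le\varepsilon^{1/2}N^{-(\sigma-1/2)}$ rather than from $N^{-\sigma}$, which alone does not beat $\varepsilon^{1/2}N^{-(k+1/2)}$), but the transition element and the $y$-derivative of the hat functions must be reworked before this proof is complete.
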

\begin{proof}
We just consider $E_1$, since $E_2$ can be analyzed in a similar way. To consider $\Vert E_1-E^I_1 \Vert$, we decompose it as follows
\begin{equation*}
\begin{aligned}
\Vert E_1-E^I_1 \Vert^2=&\Vert E_1-E^I_1 \Vert^2_{[x_0,x_{N/2-1}]\times [0,1]}
+\Vert E_1-E^I_1 \Vert^2_{[ x_{N/2-1},x_{N/2}]\times [0,1]}\\
&+\Vert E_1-E^I_1 \Vert^2_{[ x_{N/2},x_{N}]\times [0,1]}\\
=:&A_1+A_2+A_3.
\end{aligned}
\end{equation*}

Note $h_{i,x}\le C \varepsilon$ for $i=0,\ldots,N/2-2$. Lemmas  \ref{lem:Bakhvlov-mesh} and  \ref{lem:anisotropic-interpolation} yield
\begin{equation}\label{eq:A1}
\begin{aligned}
A_1=&\sum_{i=0}^{N/2-2}\sum_{j=0}^{N-1} \Vert E_1-E^I_1 \Vert^2_{\tau_{i,j}} 
\le   C \sum_{i=0}^{N/2-2}\sum_{j=0}^{N-1}\sum_{l+m=k+1}h^{2l}_{i,x}h^{2m}_{j,y}
\left\Vert \frac{ \partial^{k+1} E_1 }{ \partial x^l \partial y^m } \right \Vert^2_{\tau_{i,j}}\\
\le & C \sum_{i=0}^{N/2-2}\sum_{j=0}^{N-1}\sum_{l+m=k+1}h^{2l}_{i,x}h^{2m}_{j,y}
\; (\varepsilon^{-2l}  e^{-2\beta_1 x_i/\varepsilon}h_{i,x} h_{j,y})\\
\le & C\sum_{i=0}^{N/2-2}\sum_{j=0}^{N-1}\sum_{l+m=k+1} (\varepsilon^{2l} N^{-2 l})\;
h_{j,y}^{2m+1} \varepsilon^{1-2l}\\
\le & C \varepsilon N^{-(2k+1)}.
\end{aligned}
\end{equation}

Now we consider the term $A_2$. 
Set $D_0:=[ x_{N/2-1},x_{N/2}]\times [0,1]$. Recall $|E_1(x_{N/2-1}^i,y_j^t)|\le C N^{-\sigma}$.
Then we have
\begin{equation}\label{eq:A2-1}
\begin{aligned}
\Vert  E_1^I \Vert_{ D_0 }^2  \le & C N^{-2\sigma}
\left( \sum_{i=N/2-1}\sum_{s=0}^{k-1}\left(\sum_{j=0}^{N-1}\sum_{t=0}^{k-1} \Vert \theta_{i,j}^{s,t} \Vert_{ D_0 }^2
+  \Vert \theta_{i,N}^{s,0} \Vert_{D_0}^2 \right) \right)\\
&+C N^{-2\sigma}
\left( \sum_{i=N/2} \left(\sum_{j=0}^{N-1}\sum_{t=0}^{k-1} \Vert \theta_{i,j}^{0,t} \Vert_{ D_0 }^2
+  \Vert \theta_{i,N}^{0,0} \Vert_{D_0}^2 \right) \right)\\
\le & C N^{-2\sigma}  \sum_{j=0}^{N-1} h_{N/2-1,x} h_{j,y} 
\le C N^{-(2\sigma+1)},
\end{aligned}
\end{equation}
where we have used Lemma \ref{lem:Bakhvlov-mesh}. Direct calculations yield
\begin{equation}\label{eq:A2-2}
\Vert E_1 \Vert_{D_0}\le C \varepsilon^{1/2} N^{-\sigma}.
\end{equation}
From \eqref{eq:A2-1} and \eqref{eq:A2-2} we obtain
\begin{equation}\label{eq:A2}
A_2\le C(\varepsilon+N^{-1}) N^{-2\sigma}.
\end{equation}

Note $|E_1(x_{i}^s,y_j^t)|\le C \varepsilon^{\sigma}$ for $i\ge N/2$. Then the triangle inequality and
H\"{o}lder inequalities yield
\begin{equation}\label{eq:A3}
\begin{aligned}
A_3\le &
C\left( \Vert E_1 \Vert^2_{[ x_{N/2},x_{N}]\times [0,1]}+\Vert E_1^I \Vert^2_{[ x_{N/2},x_{N}]\times [0,1]} \right)\\
\le &
C\left( \Vert E_1 \Vert^2_{\infty,[ x_{N/2},x_{N}]\times [0,1] }+\Vert E_1^I \Vert^2_{  \infty, [ x_{N/2},x_{N}]\times [0,1]  }  \right)\\
\le &
C \varepsilon^{2\sigma}.
\end{aligned}
\end{equation}
Collecting \eqref{eq:A1}, \eqref{eq:A2} and \eqref{eq:A3}, we prove the first and the second  bounds.

Now we consider $\vert E_1-E_1^I \vert^2_1=\Vert (E_1-E_1^I)_x \Vert^2+\Vert (E_1-E_1^I)_y \Vert^2$.  
Lemmas \ref{lem:Bakhvlov-mesh} and  \ref{lem:anisotropic-interpolation} give
\begin{equation}\label{eq:B1}
\begin{aligned}
&\Vert (E_1-E_1^I)_x \Vert^2_{\Omega_{11}\cup\Omega_{21}}=\sum_{i=0}^{N/2-2}\sum_{j=0}^{N-1} \Vert (E_1-E^I_1)_x \Vert^2_{\tau_{i,j}} \\
\le  & C \sum_{i=0}^{N/2-2}\sum_{j=0}^{N-1}\sum_{l+m=k}h^{2l}_{i,x}h^{2m}_{j,y}
\left\Vert \frac{ \partial^{k+1} E_1 }{ \partial x^{l+1} \partial y^m } \right \Vert^2_{\tau_{i,j}}\\
\le & C \sum_{i=0}^{N/2-2}\sum_{j=0}^{N-1}\sum_{l+m=k}h^{2l}_{i,x}h^{2m}_{j,y}
\; (\varepsilon^{-2(l+1)}  e^{-2\beta_1 x_i/\varepsilon} h_{i,x}h_{j,y})\\
\le & C\sum_{i=0}^{N/2-2}\sum_{j=0}^{N-1}\sum_{l+m=k} (\varepsilon^{2l+1} N^{-(2 l+1)})\;
h_{j,y}^{2m+1} \varepsilon^{-2(l+1)}\\
\le & C \varepsilon^{-1} N^{-2k}.
\end{aligned}
\end{equation}
Note $\Vert (E_1)_x \Vert_{\Omega_{12}\cup\Omega_{22}}\le C\varepsilon^{-1/2}N^{-\sigma}$. 
Then from the triangle inequality one has
\begin{equation}\label{eq:B2}
\begin{aligned}
&\Vert (E_1-E_1^I)_x \Vert^2_{\Omega_{12}\cup\Omega_{22}}
\le  2 \Vert (E_1)_x \Vert^2_{\Omega_{12}\cup\Omega_{22}}+2\Vert (E_1^I)_x \Vert^2_{\Omega_{12}\cup\Omega_{22}}\\
\le & 
2 \Vert (E_1)_x \Vert^2_{\Omega_{12}\cup\Omega_{22}}+2\sum_{j=0}^{N-1} \Vert (E_1^I)_x \Vert^2_{\tau_{N/2-1,j}}+2\sum_{i=N/2}^{N-1}\sum_{j=0}^{N-1} \Vert (E_1^I)_x \Vert^2_{\tau_{i,j}}\\
\le &
C \varepsilon^{-1}N^{-2\sigma}+C\varepsilon^{2\sigma}N^2,
\end{aligned}
\end{equation}
where inverse inequalities \cite[Theorem 3.2.6]{Ciarlet:1978-finite} and Lemma \ref{lem:Bakhvlov-mesh} yield
\begin{align*}
&\sum_{j=0}^{N-1} \Vert (E_1^I)_x \Vert^2_{\tau_{N/2-1,j}}+\sum_{i=N/2}^{N-1}\sum_{j=0}^{N-1} \Vert (E_1^I)_x \Vert^2_{\tau_{i,j}}\\
\le & C \sum_{j=0}^{N-1} h^{-2}_{N/2-1,x} \Vert E_1^I \Vert^2_{\tau_{N/2-1,j}}+ C \sum_{i=N/2}^{N-1}\sum_{j=0}^{N-1} h^{-2}_{i,x} \Vert E_1^I \Vert^2_{\tau_{i,j}}\\
\le &C \sum_{j=0}^{N-1} h^{-2}_{N/2-1,x} \Vert E_1^I \Vert^2_{ \infty,\tau_{N/2-1,j}  } h_{N/2-1,x} h_{j,y}+C \sum_{i=N/2}^{N-1}\sum_{j=0}^{N-1} h^{-2}_{i,x} \Vert E_1^I \Vert^2_{ \infty,\tau_{i,j}  } h_{i,x}h_{j,y}\\
\le & C \varepsilon^{-1} N^{-2\sigma}+C \varepsilon^{2\sigma} N^{2}.
\end{align*}

Similar to the derivations of \eqref{eq:B1}, we have
\begin{equation}\label{eq:B3}
\Vert (E_1-E_1^I)_y \Vert^2_{\Omega_{11}\cup\Omega_{21}}\le C \varepsilon N^{-2k}.
\end{equation}
Note $\Vert (E_1)_y \Vert_{\Omega_{12}\cup\Omega_{22}}\le C\varepsilon^{1/2}N^{-\sigma}$ and $h_{j,y}\ge C\varepsilon N^{-1}$ for $j=0,\ldots,N-1$. Then one has
\begin{equation}\label{eq:B4}
\begin{aligned}
\Vert (E_1-E_1^I)_y \Vert^2_{\Omega_{12}\cup\Omega_{22}}
\le &  2 \Vert (E_1)_y \Vert^2_{\Omega_{12}\cup\Omega_{22}}+2\Vert (E_1^I)_y \Vert^2_{\Omega_{12}\cup\Omega_{22}} \\
\le &  C \varepsilon N^{-2\sigma}+C\varepsilon^{-1} N^{2-2\sigma},
\end{aligned}
\end{equation}
where 
\begin{align*}
\Vert (E_1^I)_y \Vert^2_{\Omega_{12}\cup\Omega_{22}}=&\sum_{i=N/2-1}^{N-1}\sum_{j=0}^{N-1} \Vert (E_1^I)_y \Vert^2_{\tau_{i,j}}
\le  C \sum_{i=N/2-1}^{N-1}\sum_{j=0}^{N-1} h^{-2}_{j,y} \Vert E_1^I \Vert^2_{\tau_{i,j}}\\
\le &C \sum_{i=N/2}^{N-1}\sum_{j=0}^{N-1} h^{-2}_{j,y}( N^{-2\sigma} h_{i,x}h_{j,y})\\
\le & C   N^{1-2\sigma} \max_j h_{j,y}^{-1}\le \varepsilon^{-1} N^{2-2\sigma}. 
\end{align*}
Collecting \eqref{eq:B1}--\eqref{eq:B4} and considering $\Vert E_1 -E_1^I \Vert\le CN^{-(k+1)}$, we prove the third bound.

Now we consider $\Vert \mathcal{P}_1 E_1 \Vert_{\varepsilon}$. From \eqref{eq:P1E1} and $|E(x_{N/2-1}^i,y_j^t)|\le CN^{-\sigma}$, we can easily obtain
\begin{equation}\label{eq:B5}
\begin{aligned}
\Vert \mathcal{P}_1 E_1 \Vert_{\varepsilon}^2
\le &
CN^{-2\sigma}   \sum_{s=0}^{k-1} \left( \sum_{j=0}^{N-1}  \sum_{t=0}^{k-1}\Vert \theta_{N/2-1,j}^{s,t} \Vert^2_{\varepsilon}+
\Vert \theta_{N/2-1,N}^{s,0} \Vert^2_{\varepsilon} \right)\\
\le &
CN^{-2\sigma}    \sum_{j=0}^{N-1}  
\left( \varepsilon h^{-1}_{N/2-1,x}h_{j,y}+\varepsilon h_{N/2-1,x}h^{-1}_{j,y} +h_{N/2-1,x}h_{j,y}\right)\\
\le &
CN^{1-2\sigma},
\end{aligned}
\end{equation}
where we have used $h_{j,y}^{-1}\le C\varepsilon^{-1} N$ from Lemma \ref{lem:Bakhvlov-mesh}.   The terms $\mathcal{P}_2E_2$, $\mathcal{B}_1E_1$ and $\mathcal{B}_2E_2$ can be analyzed in a similar way.
\end{proof}


\begin{lemma}\label{lem:interpolation-error-2}
Let Assumptions \ref{assumption:1} and \ref{assumption-2} hold true.
Let $E^I_{12}$ denote the Lagrange interpolant of $
E_{12}$ on the Bakhvalov-type mesh $\mathcal{T}_N$. Then there exists a constant $C$ such
that the following interpolation error estimates hold true:  
\begin{align*}
&\Vert E_{12}-E^I_{12} \Vert_{\Omega\setminus \tau_{N/2-1,N/2-1} }\le C \varepsilon N^{-k}+C\varepsilon^{1/2}N^{-(k+1)}, \\ 
&\Vert E_{12}-E^I_{12} \Vert \le C \varepsilon N^{-k}+C\varepsilon^{1/2}N^{-(k+1)}+CN^{-1-2\sigma},\\ 
&\Vert E_{12}-E^I_{12} \Vert_{\varepsilon} \le C N^{-k},  \\
&\Vert  \mathcal{P}_{12} E_{12}   \Vert_{\varepsilon} \le CN^{-1/2-2\sigma}, 
\end{align*}
where $\mathcal{P}_{12} E_{12}$ is defined in \eqref{eq:P12E12}.
\end{lemma}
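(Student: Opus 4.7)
I intend to mirror the proof of Lemma~\ref{lem:interpolation-error-1}, exploiting the fact that the corner layer $E_{12}$ decays exponentially in \emph{both} $x$ and $y$ via~\eqref{eq:(2.1e)}. The starting point will be to split each of the squared norms $\|E_{12}-E_{12}^I\|^2$, $\|(E_{12}-E_{12}^I)_x\|^2$ and $\|(E_{12}-E_{12}^I)_y\|^2$ as a sum over the four subdomains $\Omega_{11},\Omega_{12},\Omega_{21},\Omega_{22}$, treating the single corner element $\tau_{N/2-1,N/2-1}\subset\Omega_{22}$ separately so as to distinguish the first bound from the second.

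On the fine-fine region $\Omega_{11}$, I will apply Lemma~\ref{lem:anisotropic-interpolation} together with the pointwise bound $|\partial^{k+1}E_{12}/\partial x^l \partial y^m|\le C\varepsilon^{-(k+1)} e^{-(\beta_1 x+\beta_2 y)/\varepsilon}$. The resulting double sum factors into one-dimensional sums, each controlled by \eqref{eq:mesh-7} used simultaneously in the $x$ and $y$ directions (valid since $\sigma\ge k+1$); the factor $\varepsilon^{-2(k+1)}$ then cancels cleanly, producing a contribution of order $\varepsilon N^{-(k+1)}$, which is absorbed into the right-hand side of the first bound. An analogous estimate for $(E_{12}-E_{12}^I)_x$ and $(E_{12}-E_{12}^I)_y$ will yield the $N^{-k}$-type contribution for the energy norm.

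On the coarse strips $\Omega_{12}$ and $\Omega_{21}$ and on $\Omega_{22}\setminus\tau_{N/2-1,N/2-1}$, the exponential smallness of $E_{12}$ takes over: on $\Omega_{12}$, since $x\ge x_{N/2-1}$ forces $e^{-\beta_1 x/\varepsilon}\le CN^{-\sigma}$, one has $|E_{12}(x,y)|\le CN^{-\sigma}e^{-\beta_2 y/\varepsilon}$, and direct integration in $y$ supplies the factor $\varepsilon^{1/2}$. I will bound $\|E_{12}\|$ on these regions by such direct integration and bound $\|E_{12}^I\|$ via its nodal values combined with the basis-function $L^2$-norms, following the pattern of~\eqref{eq:A2-1}--\eqref{eq:A2}. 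For the derivative estimates I will combine the triangle inequality with the inverse estimate \cite[Theorem~3.2.6]{Ciarlet:1978-finite}, mirroring~\eqref{eq:B2}--\eqref{eq:B4}. The single element $\tau_{N/2-1,N/2-1}$, where both exponential decays are active so that $|E_{12}|\le CN^{-2\sigma}$ while the area is at most $CN^{-2}$, accounts for the extra $CN^{-1-2\sigma}$ contribution in the second bound.

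Finally, $\mathcal{P}_{12}E_{12}$ is supported in the (at most) four mesh rectangles meeting at $(x_{N/2-1},y_{N/2-1})$, and the nodal values satisfy $|E_{12}(x_{N/2-1}^s,y_{N/2-1}^t)|\le CN^{-2\sigma}$. Using Lemma~\ref{lem:Bakhvlov-mesh} to obtain $\varepsilon h_{N/2-1,x}^{-1}\le C$ and $h_{N/2-1,x},h_{N/2-1,y}\le 2\sigma N^{-1}$, the energy norm of each basis function $\theta_{N/2-1,N/2-1}^{s,t}$ is at most $CN^{-1/2}$, so summing over $s,t$ delivers $\|\mathcal{P}_{12}E_{12}\|_\varepsilon\le CN^{-1/2-2\sigma}$. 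The main obstacle will be the careful bookkeeping in the transition rows $\tau_{N/2-1,j}$ ($0\le j\le N/2-2$) and columns $\tau_{i,N/2-1}$ ($0\le i\le N/2-2$), where the mesh is coarse in one direction and fine in the other and where the decay of $E_{12}$ in the fine direction must be combined with $e^{-\beta_1 x_{N/2-1}/\varepsilon}\le CN^{-\sigma}$ or $e^{-\beta_2 y_{N/2-1}/\varepsilon}\le CN^{-\sigma}$ in the coarse direction; it is precisely this anisotropic interplay that produces the two distinct terms $C\varepsilon N^{-k}$ and $C\varepsilon^{1/2}N^{-(k+1)}$ in the first bound.
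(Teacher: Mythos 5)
Your proposal is correct and follows essentially the same route as the paper: the same splitting into $\Omega_{11}$, the coarse/transition strips, and the corner element $\tau_{N/2-1,N/2-1}$, with Lemma~\ref{lem:anisotropic-interpolation} plus \eqref{eq:mesh-7} in both directions on $\Omega_{11}$, triangle-inequality and sup-norm/inverse-estimate arguments elsewhere, and nodal-value bounds $|E_{12}(x_{N/2-1}^s,y_{N/2-1}^t)|\le CN^{-2\sigma}$ for $\mathcal{P}_{12}E_{12}$. Only your final bookkeeping remark is slightly off --- the $C\varepsilon N^{-k}$ term actually originates from the fine-fine region $\Omega_{11}$ (your sharper factorization there gives $C\varepsilon\ln N\,N^{-(k+1)}\le C\varepsilon N^{-k}$, not quite $\varepsilon N^{-(k+1)}$, because $\sum_i h_{i,x}=x_{N/2-1}\le C\varepsilon\ln N$), while the transition strips contribute only $C\varepsilon^{1/2}N^{-\sigma}$ --- but this does not affect the validity of the stated bounds.
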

\begin{proof}
Consider 
\begin{equation}\label{eq:C0}
\begin{aligned}
\Vert E_{12}-E_{12}^I \Vert^2
=&\Vert E_{12}-E_{12}^I \Vert^2_{\Omega_{11}}+\Vert E_{12}-E_{12}^I \Vert^2_{\Omega_{12}\cup\Omega_{21}\cup (\Omega_{22}\setminus \tau_{N/2-1,N/2-1}) }\\
&+\Vert E_{12}-E_{12}^I \Vert^2_{ \tau_{N/2-1,N/2-1} }.
\end{aligned}
\end{equation}

Similar to \eqref{eq:A1}, the estimation for $ \Vert E_{12}-E_{12}^I \Vert^2_{\Omega_{11}} $ is as follows:
\begin{equation}\label{eq:C1}
 \Vert E_{12}-E_{12}^I \Vert^2_{\Omega_{11}} 
\le    C  \varepsilon^2 N^{-2k}.
\end{equation}

To analyze the second term in \eqref{eq:C0}, we frequently use the following estimation
\begin{equation}\label{eq:CC}
\Vert E_{12}^I \Vert^2_{\tau_{i,j}}\le 
\Vert E_{12}^I \Vert^2_{ \infty,\tau_{i,j} }h_{i,x}h_{j,y}
\le
Ce^{-2(\beta_1 x_i+\beta_2y_j)/\varepsilon} h_{i,x}h_{j,y}.
\end{equation}
From \eqref{eq:(2.1e)}, direct calculations yield 
\begin{equation}\label{eq:C6}
\Vert E_{12}  \Vert_{\Omega\setminus \Omega_{11}}\le C \varepsilon N^{-\sigma}.
\end{equation}
From \eqref{eq:CC} and Lemma \ref{lem:anisotropic-interpolation}, one has 
\begin{equation}\label{eq:C2}
\begin{aligned}
\Vert E_{12}^I \Vert^2_{\Omega_{12} }=&\sum_{i=N/2-1}^{N-1}\sum_{j=0}^{N/2-2} \Vert E_{12}^I \Vert^2_{\tau_{i,j}} 
\le   C \sum_{i=N/2-1}^{N-1}\sum_{j=0}^{N/2-2} e^{-2(\beta_1 x_i+\beta_2y_j)/\varepsilon} h_{i,x}h_{j,y}\\
\le &
C \sum_{i=N/2-1}^{N-1} e^{-2 \beta_1 x_i/\varepsilon} h_{i,x}\;\sum_{j=0}^{N/2-2} e^{-2\beta_2y_j/\varepsilon} h_{j,y}
\le
C  \varepsilon N^{-2\sigma},
\end{aligned}
\end{equation}
where Lemma \ref{lem:Bakhvlov-mesh}  has been used.
Similarly, we have
\begin{equation}\label{eq:C3}
\Vert E_{12}^I \Vert^2_{\Omega_{21} } \le   C  \varepsilon N^{-2\sigma}.
\end{equation}
 Using \eqref{eq:CC} again, we have
\begin{equation}\label{eq:C4}
\begin{aligned}
&\Vert E_{12}^I \Vert^2_{ (\Omega_{22}\setminus \tau_{N/2-1,N/2-1}) }=
\sum_{j=N/2}^{N-1} \Vert E_{12}^I \Vert^2_{\tau_{N/2-1,j}}+
\sum_{i=N/2}^{N-1}\sum_{j=N/2-1}^{N-1} \Vert E_{12}^I \Vert^2_{\tau_{i,j}} \\
\le &
\sum_{j=N/2}^{N-1} e^{-2(\beta_1 x_{N/2-1}+\beta_2y_j)/\varepsilon} h_{N/2-1,x}h_{j,y}+
\sum_{i=N/2}^{N-1}\sum_{j=N/2-1}^{N-1} e^{-2(\beta_1 x_i+\beta_2y_j)/\varepsilon} h_{i,x}h_{j,y}\\
\le & C  N^{-1-2\sigma}\varepsilon^{2\sigma}+C N^{-2\sigma}\varepsilon^{2\sigma}
\end{aligned}
\end{equation}
and
\begin{equation}\label{eq:C5}
\Vert E_{12}^I \Vert^2_{ \tau_{N/2-1,N/2-1} } \le   C   N^{-2-4\sigma}.
\end{equation}
Then the triangle inequality and \eqref{eq:C1}, \eqref{eq:C6}--\eqref{eq:C5} yield the first and the second bounds.

Now we consider $\Vert (E_{12}-E^I_{12})_x \Vert$.  By similar  derivations for \eqref{eq:B1}, we have
\begin{equation}\label{eq:D1}
\Vert (E_{12}-E^I_{12})_x \Vert_{\Omega_{11}}\le C N^{-k}.
\end{equation}
Similar to  \eqref{eq:B2}, the following bound can be obtained
\begin{equation}\label{eq:D2}
\Vert (E_{12}-E^I_{12})_x \Vert_{\Omega\setminus\Omega_{11}}\le C \varepsilon^{-1/2}N^{1-\sigma}.
\end{equation}
 Combing \eqref{eq:D1} and \eqref{eq:D2}, we prove
$$
\Vert (E_{12}-E^I_{12})_x \Vert\le C \varepsilon^{-1/2}N^{1-\sigma}
$$
and the same bound for $\Vert (E_{12}-E^I_{12})_y \Vert$. Thus we prove the third bound. Similar to   \eqref{eq:B5}, the  final bound can be proved.
\end{proof}

\begin{lemma}\label{lem:interpolation-error-3}
Let Assumptions \ref{assumption:1} and \ref{assumption-2} hold true.
Let $S^I$ and $u^I$ denote the Lagrange interpolants of $
S$ and $u$ on the Bakhvalov-type mesh $\mathcal{T}_N$, respectively. Let $\Pi u $ and $\pi_i E_i$, $i=1,2,12$, be defined in \eqref{eq:interpolant-u} and \eqref{eq:interpolant-E-1}, respectively. Then there exists a constant $C$ such
that the following interpolation error estimates hold true:  
\begin{align*}
&\sum_{i=1,2,12}\Vert \pi_i E_i-E_i \Vert \le CN^{-(k+1)},\\
&\Vert \nabla (S-S^I) \Vert +\Vert u-u^I \Vert_{\varepsilon}+\Vert u-\Pi u \Vert_{\varepsilon} \le C N^{-k}.
\end{align*}

\end{lemma}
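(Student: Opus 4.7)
The plan is to combine the interpolation bounds of Lemmas~\ref{lem:interpolation-error-1} and~\ref{lem:interpolation-error-2} with a direct anisotropic estimate for the smooth component $S$, and repeatedly use the condition $\sigma\ge k+1$ to absorb the correction terms $\mathcal{P}_iE_i$, $\mathcal{B}_iE_i$, $\mathcal{P}_{12}E_{12}$ below the target accuracies.

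For the first bound, I would expand $\pi_iE_i-E_i=(E_i^I-E_i)-\mathcal{P}_iE_i-\mathcal{B}_iE_i$ for $i=1,2$ and $\pi_{12}E_{12}-E_{12}=(E_{12}^I-E_{12})-\mathcal{P}_{12}E_{12}$, then bound each piece separately. The $L^2$ interpolation errors $\Vert E_i^I-E_i\Vert\le CN^{-(k+1)}$ (after absorbing $\varepsilon\le N^{-1}$ for $i=12$) come from the second bound in each of Lemmas~\ref{lem:interpolation-error-1}--\ref{lem:interpolation-error-2}. For $\mathcal{P}_iE_i$ and $\mathcal{B}_iE_i$ the energy estimate $CN^{1/2-\sigma}$ from Lemma~\ref{lem:interpolation-error-1} is too weak in $L^2$, so I would redo the computation as in~\eqref{eq:A2-1} using $|E_i|\le CN^{-\sigma}$ on the interface line $x_{N/2-1}$ (resp.~$y_{N/2-1}$) together with the width bound $h_{N/2-1,\cdot}\le CN^{-1}$ from Lemma~\ref{lem:Bakhvlov-mesh}, yielding $\Vert\mathcal{P}_iE_i\Vert,\Vert\mathcal{B}_iE_i\Vert\le CN^{-\sigma-1/2}$ and analogously $\Vert\mathcal{P}_{12}E_{12}\Vert\le CN^{-1-2\sigma}$. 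With $\sigma\ge k+1$ each of these is $\le CN^{-(k+1)}$.

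For the second bound I would treat the three terms independently. For $\Vert\nabla(S-S^I)\Vert$, apply Lemma~\ref{lem:anisotropic-interpolation} elementwise and use the uniform bound $|\partial^{k+1}S|\le C$ from Assumption~\ref{assumption:1}: on each $\tau$ one has $\Vert(S-S^I)_x\Vert_\tau^2\le C(\max h_\tau)^{2k}|\tau|$, which sums via $\max h_\tau\le CN^{-1}$ to $CN^{-2k}$, and the $y$-derivative is identical. Decomposing $u-u^I=(S-S^I)+\sum_i(E_i-E_i^I)$ and using the analogous $\Vert S-S^I\Vert\le CN^{-(k+1)}$ together with $\Vert E_i-E_i^I\Vert_\varepsilon\le CN^{-k}$ from Lemmas~\ref{lem:interpolation-error-1}--\ref{lem:interpolation-error-2} gives $\Vert u-u^I\Vert_\varepsilon\le CN^{-k}$. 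Finally, from the decompositions~\eqref{eq:interpolant-u}--\eqref{eq:interpolant-E-1} I would write $u-\Pi u=(u-u^I)+\sum_{i=1,2,12}\mathcal{P}_iE_i+\mathcal{B}_1E_1+\mathcal{B}_2E_2$ and conclude by the triangle inequality, using the energy-norm estimates $\Vert\mathcal{P}_iE_i\Vert_\varepsilon,\Vert\mathcal{B}_iE_i\Vert_\varepsilon\le CN^{1/2-\sigma}\le CN^{-(k+1/2)}$ from the last lines of Lemmas~\ref{lem:interpolation-error-1}--\ref{lem:interpolation-error-2}.

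No single step is deep---all the genuine analysis was performed in the preceding two lemmas---so the proof is essentially a triangle-inequality assembly. The only subtle point, and hence the main ``obstacle,'' is noticing that the first bound requires a \emph{sharper} $L^2$ estimate for the $\mathcal{P}$ and $\mathcal{B}$ correction terms (of order $N^{-\sigma-1/2}$) rather than reusing the energy-norm estimate $N^{1/2-\sigma}$, which would fall short of $N^{-(k+1)}$ by exactly one factor of $N$. This sharpening is obtained by exploiting that the supports of these corrections lie in a single mesh column (or row) of $x$-width $O(N^{-1})$.
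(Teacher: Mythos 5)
Your proposal is correct and follows essentially the same route as the paper: triangle inequalities assembling Lemmas~\ref{lem:interpolation-error-1} and~\ref{lem:interpolation-error-2} with a direct anisotropic estimate for $S$, plus the key observation that the $L^2$ norms of $\mathcal{P}_iE_i$ (supported in a single $O(N^{-1})$-wide column/row) gain a factor over their energy norms, which is exactly the paper's ``check the derivations in \eqref{eq:B5}'' step. If anything you are slightly more careful than the paper, which drops the $\mathcal{B}_iE_i$ terms from \eqref{eq:pi-E-0} and from the triangle inequality for $\Vert\pi_1E_1-E_1\Vert$, whereas you bound them explicitly.
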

\begin{proof}
Check the derivations in \eqref{eq:B5} and one finds that $ \Vert \mathcal{P}_1 E_1 \Vert\le C N^{-1/2-\sigma}$. Thus 
\begin{equation*} 
\Vert \pi_1 E_1-E_1 \Vert\le \Vert   E_1^I-E_1 \Vert +\Vert \mathcal{P}_1 E_1 \Vert\le CN^{-(k+1)}.
\end{equation*}
Similarly, we can prove the bounds for $E_2$ and $E_{12}$.

From  Lemma   \ref{lem:anisotropic-interpolation} and \eqref{eq:(2.1b)}, we   prove $\Vert \nabla (S-S^I) \Vert\le C N^{-k}$ and $\Vert S-S^I \Vert_{\varepsilon}\le C(\varepsilon^{1/2}+N^{-1})N^{-k}$ easily. Then \eqref{eq:decomposition}, the triangle inequality, Lemmas \ref{lem:interpolation-error-1} and \ref{lem:interpolation-error-2} yield
$\Vert u-u^I \Vert_{\varepsilon} \le C N^{-k}$. Besides, from \eqref{eq:pi-E-0}, Lemmas \ref{lem:interpolation-error-1} and \ref{lem:interpolation-error-2} one has 
$\Vert u-\Pi u \Vert_{\varepsilon}\le C N^{-k}$.

\end{proof}

\section{Uniform convergence}
Set $\chi:=\Pi u-u^N$.
From \eqref{eq:coercivity}, \eqref{eq:Galerkin orthogonality}, \eqref{eq:decomposition}, \eqref{eq:interpolant-u}  and integration by parts, one has
\begin{equation}\label{eq:uniform-convergence-1}
\begin{split}
&\alpha \Vert \chi \Vert_{\varepsilon}^2\le a(\chi,\chi)=a(\Pi u-u,\chi)\\
=&\varepsilon \int_{\Omega} \nabla (\Pi u-u) \nabla \chi \mr{d}x\mr{d}y+\sum_{i=1,2}\int_{\Omega} ( E^I_i-\mathcal{P}_i E_i-E_i)\; \b{b}\cdot\nabla\chi\mr{d}x\mr{d}y\\
&+\int_{\Omega} ( E^I_{12}-\mathcal{P}_{12} E_{12}-E_{12}) \; \b{b}\cdot\nabla\chi\mr{d}x\mr{d}y-\int_{\Omega}\b{b}\cdot\nabla (S^I-S) \; \chi\mr{d}x\mr{d}y\\
&+\sum_{i=1,2,12} \int_{\Omega}(\nabla \cdot \b{b})  (\pi_i E_i-E_i) \;\chi\mr{d}x\mr{d}y+\int_{\Omega}c (\Pi u-u)\chi\mr{d}x\mr{d}y\\
&+\sum_{i=1,2} \int_{\Omega} ( -\mathcal{B}_i E_i)\; \b{b}\cdot\nabla\chi\mr{d}x\mr{d}y\\
=:&\mr{I}+\mr{II}+\mr{III}+\mr{IV}+\mr{V}+\mr{VI}+\mr{VII}.
\end{split}
\end{equation}
Now we  analyze the terms on the right-hand side of \eqref{eq:uniform-convergence-1}. The Cauchy-Schwarz inequality and Lemma \ref{lem:interpolation-error-3} yield 
\begin{equation}\label{eq:I}
\begin{aligned}
&( \mr{I}+\mr{VI}) +( \mr{IV}+\mr{V})\\
\le & C \Vert \Pi u-u \Vert_{\varepsilon}\Vert \chi \Vert_{\varepsilon} 
+C \big( \Vert \nabla (S^I-S) \Vert+\sum_{i=1,2,12} \Vert \pi_i E_i-E_i \Vert \big)\Vert \chi \Vert \\
\le & CN^{-k}\Vert \chi \Vert_{\varepsilon}.
\end{aligned}
\end{equation}

We put the arguments for $\mr{II}$, $\mr{III}$  and $\mr{VII}$  in the following three lemmas.   
\begin{lemma}\label{lem:II}
Let Assumptions \ref{assumption:1} and \ref{assumption-2} hold true. Let $\pi_i E_i$ with $i=1,2$ be defined in \eqref{eq:interpolant-E-1}. Then one has
\begin{equation}\label{eq:II}
|\mr{II}|=\left|\sum_{i=1,2}\int_{\Omega} (E^I_i-\mathcal{P}_i E_i-E_i)\; \b{b}\cdot\nabla\chi\mr{d}x\mr{d}y \right|\le CN^{-(k+1/2)} \Vert \chi \Vert_{\varepsilon}.
\end{equation}
\end{lemma}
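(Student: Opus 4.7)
The plan is to exploit Cauchy--Schwarz and reduce the estimate to an $L^2$ bound on the residual $\phi_1 := E^I_1 - \mathcal{P}_1 E_1 - E_1$ (and, by the $x \leftrightarrow y$ symmetric argument, $\phi_2$). Since $|\b{b}|\le C$ and $\|\nabla \chi\|\le \varepsilon^{-1/2}\|\chi\|_\varepsilon$, after the triangle inequality on the sum over $i=1,2$ it suffices to prove $\|\phi_1\|\le C\varepsilon^{1/2}N^{-(k+1/2)}$.

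First I would split $\Omega = R \cup (\Omega \setminus R)$ with the transition column $R := [x_{N/2-1},x_{N/2}]\times[0,1]$. On $\Omega \setminus R$ the bound $\|E^I_1-E_1\|_{\Omega \setminus R}\le C\varepsilon^{1/2}N^{-(k+1/2)}$ is delivered directly by Lemma~\ref{lem:interpolation-error-1}, so only $\mathcal{P}_1 E_1$ remains. By \eqref{eq:P1E1}, only its $s=0$ hats $\theta_{N/2-1,j}^{0,t}$ extend into the neighbouring cell column $[x_{N/2-2},x_{N/2-1}]\times[0,1]$; their coefficients $E_1(x_{N/2-1},y_j^t)$ are $O(N^{-\sigma})$, and combined with $h_{N/2-2,x}\le C\varepsilon$ from Lemma~\ref{lem:Bakhvlov-mesh} this gives $\|\mathcal{P}_1 E_1\|_{\Omega \setminus R}\le C\varepsilon^{1/2}N^{-\sigma}$, which is absorbed since $\sigma \ge k+1$.

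The hard part is estimating $\phi_1$ on $R$, where $E_1$ and $E^I_1$ are not individually small. The key observation is the tensor-product design of $\mathcal{P}_1$: on each cell $\tau_{N/2-1,j}\subset R$, by \eqref{eq:P1E1} the difference $E^I_1-\mathcal{P}_1 E_1$ vanishes at every tensor node with $x$-coordinate $x_{N/2-1}^s$ for $s=0,\dots,k-1$, and inherits the values of $E_1$ only at the nodes on the right edge $x=x_{N/2}$. Hence it factors as $L_x(x)\,\tilde L_y^{(j)}(y)$, with $L_x$ the (uniformly bounded) Lagrange $x$-polynomial vanishing at the $k$ interior nodes and equal to $1$ at $x_{N/2}$, and $\tilde L_y^{(j)}$ the $y$-Lagrange interpolant of $E_1(x_{N/2},\cdot)$. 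Because $|E_1(x_{N/2},\cdot)|\le C e^{-\beta_1 x_{N/2}/\varepsilon}\le C\varepsilon^\sigma$, one gets $\|E^I_1-\mathcal{P}_1 E_1\|_{L^\infty(R)}\le C\varepsilon^\sigma$, hence $\|E^I_1-\mathcal{P}_1 E_1\|_R\le C\varepsilon^\sigma$. Combined with the direct computation $\|E_1\|_R^2\le C\int_R e^{-2\beta_1 x/\varepsilon}\,\mr{d}x\mr{d}y\le C\varepsilon N^{-2\sigma}$, this yields $\|\phi_1\|_R\le C(\varepsilon^\sigma + \varepsilon^{1/2}N^{-\sigma}) \le C\varepsilon^{1/2}N^{-(k+1/2)}$, using $\varepsilon\le N^{-1}$ and $\sigma\ge k+1$.

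Assembling the pieces yields $\|\phi_1\|\le C\varepsilon^{1/2}N^{-(k+1/2)}$, and Cauchy--Schwarz together with $\|\nabla\chi\|\le \varepsilon^{-1/2}\|\chi\|_\varepsilon$ closes the argument. The main obstacle is the separable-structure observation on $R$: without exploiting the fact that $E^I_1 - \mathcal{P}_1 E_1$ interpolates only the tiny right-edge values $E_1(x_{N/2},\cdot)=O(\varepsilon^\sigma)$, one obtains only $\|\mathcal{P}_1 E_1\|_R = O(N^{-\sigma-1/2})$; Cauchy--Schwarz against $\|\nabla \chi\|$ then loses a factor $\varepsilon^{-1/2}$ and fails whenever $\varepsilon \ll N^{-1}$.
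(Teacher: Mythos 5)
Your proposal is correct and follows essentially the same route as the paper: the same split into the transition column $D_0=[x_{N/2-1},x_{N/2}]\times[0,1]$ and its complement, the same use of $|E_1(x_{N/2-1},\cdot)|\le CN^{-\sigma}$ with $h_{N/2-2,x}\le C\varepsilon$ for the spill-over of $\mathcal{P}_1E_1$, and the same key observation that on $D_0$ the difference $E_1^I-\mathcal{P}_1E_1$ reduces to the interpolant of the right-edge values $E_1(x_{N/2},\cdot)=O(\varepsilon^\sigma)$ (the paper's $\mathcal{F}_2$). The only cosmetic difference is that you bound the global $L^2$ norm of the residual and apply Cauchy--Schwarz once, whereas the paper applies it piecewise; the resulting bounds agree.
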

\begin{proof}
For the term  $\mr{II}$, we just consider $E_1$ since we can analyze $E_2$ in a similar way. 
Set $D_0:= [x_{N/2-1},x_{N/2}]\times [0,1] $. According to \eqref{eq:interpolant-E-1} and   \eqref{eq:P1E1}, one has
\begin{equation}\label{eq:II-1}
\begin{aligned}
 &\int_{\Omega} (E^I_1-\mathcal{P}_1 E_1)\; \b{b}\cdot\nabla\chi\mr{d}x\mr{d}y \\
 =& 
 \left( E^I_1-\mathcal{P}_1 E_1-E_1,\b{b}\cdot\nabla\chi\right)_{\Omega\setminus  D_0}  +
  \left( E^I_1-\mathcal{P}_1 E_1-E_1,\b{b}\cdot\nabla\chi\right)_{ D_0 } \\
=&
 \left(   E_1^I-E_1,\b{b}\cdot\nabla\chi\right)_{\Omega\setminus D_0 } 
+\left( -\mathcal{F}_1   ,\b{b}\cdot\nabla\chi\right)_{\Omega\setminus D_0 }\\
 &+ 
 \left( \mathcal{F}_2-E_1,\b{b}\cdot\nabla\chi\right)_{ D_0 }\\
 =:&\mr{T}_1+\mr{T}_2+\mr{T}_3,
\end{aligned}
\end{equation}
where $(E^I_1-\mathcal{P}_1 E_1)|_{\Omega\setminus D_0 } =(E_1^I-\mathcal{F}_1)|_{\Omega\setminus D_0 } $, $(E^I_1-\mathcal{P}_1 E_1)|_{ D_0 }=\mathcal{F}_2|_{ D_0 }$ and
\begin{align*}
\mathcal{F}_1:=&  \sum_{j=0}^{N-1}\sum_{t=0}^{k-1} E_1(x_{N/2-1},y_j^t)\theta_{N/2-1,j}^{0,t}+E_1(x_{N/2-1},y_N)\theta_{N/2-1,N}^{0,0},\\
\mathcal{F}_2:=& \sum_{j=0}^{N-1}\sum_{t=0}^{k-1} E_1(x_{N/2},y_j^t)\theta_{N/2,j}^{0,t}+E_1(x_{N/2},y_N)\theta_{N/2,N}^{0,0}.
\end{align*}

From Lemma \ref{lem:interpolation-error-1} and the Cauchy-Schwarz inequality, one has
\begin{equation}\label{eq:II-2}
\begin{aligned}
|\mr{T}_1|\le &C \Vert E_1-E^I_1 \Vert_{\Omega \setminus D_0 } \Vert \nabla\chi \Vert_{\Omega \setminus D_0 } 
\le  C \varepsilon^{1/2}N^{-(k+1/2)} \Vert \nabla\chi \Vert
\le C N^{-(k+1/2)} \Vert  \chi \Vert_{\varepsilon}.
\end{aligned}
\end{equation}

Note $|E_1(x_{N/2-1},y_j^t)|\le CN^{-\sigma}$ for any $j,t$ and $h_{N/2-2,x}\le C \varepsilon$. The Cauchy-Schwarz inequality yields
\begin{equation}\label{eq:II-3}
\begin{aligned} 
|\mr{T}_2|\le &C   \Vert \mathcal{F}_1  \Vert_{D_1} \Vert \nabla \chi \Vert_{D_1}\\
\le &
C N^{-\sigma} \Vert \nabla \chi \Vert_{D_1} \left( \sum_{j=0}^{N-1}\sum_{t=0}^{k-1} \Vert  \theta_{N/2-1,j}^{0,t} \Vert_{D_1}+\Vert  \theta_{N/2-1,N}^{0,0} \Vert_{D_1}  \right)\\
\le & 
C N^{-\sigma} \Vert \nabla \chi \Vert_{D_1} \sum_{j=0}^{N-1} h^{1/2}_{N/2-2,x}h^{1/2}_{j,y}\\
\le &
C N^{1/2-\sigma}\Vert \chi \Vert_{\varepsilon},
\end{aligned}
\end{equation}
where $D_1:=[x_{N/2-2},x_{N/2-1}]\times [0,1]$ and we have made use of the supports of hat functions $\theta^{s,t}_{i,j}$.

Now we deal with the term $\mr{T}_3$. Note $|E_1(x_{N/2},y_j^t)|\le C\varepsilon^{\sigma}$ for any $j,t$ and $h_{N/2-1,x}\le C N^{-1}$. The Cauchy-Schwarz inequality yields
\begin{equation}\label{eq:II-4}
\begin{aligned}
|\mr{T}_3|\le &C  (\Vert E_1  \Vert_{D_0}+\Vert \mathcal{F}_2  \Vert_{D_0} ) \Vert \nabla \chi  \Vert_{D_0}\\
\le & C
 \left(\varepsilon^{1/2}N^{-\sigma}+\varepsilon^{\sigma} \sum_{j=0}^{N-1} h^{1/2}_{N/2-1,x}h^{1/2}_{j,y}
\right)
\Vert \nabla \chi \Vert_{D_2}\\
\le &
C    (N^{-\sigma}+\varepsilon^{\sigma-1/2})\Vert  \chi \Vert_{\varepsilon}
\end{aligned}
\end{equation}
where direct calculations and \eqref{eq:(2.1c)}  yield $\Vert E_1  \Vert_{D_0}\le C\varepsilon^{1/2}N^{-\sigma}$  and $\Vert \mathcal{F}_2  \Vert_{D_0}$ is analyzed in a similar way to $\Vert \mathcal{F}_1  \Vert_{D_1}$ in \eqref{eq:II-3}.

Substituting \eqref{eq:II-2}--\eqref{eq:II-4} into \eqref{eq:II-1}, we are done. 
\end{proof}

\begin{lemma}\label{lem:III}
Let Assumptions \ref{assumption:1} and \ref{assumption-2} hold true. Let $\pi_{12} E_{12}$  be defined in \eqref{eq:interpolant-E-1}. Then one has
\begin{equation}\label{eq:III}
|\mr{III}|=\left| \int_{\Omega} (\pi_{12} E_{12}-E_{12})\; \b{b}\cdot\nabla\chi\mr{d}x\mr{d}y \right|\le CN^{-(k+1/2)} \Vert \chi \Vert_{\varepsilon}.
\end{equation}
\end{lemma}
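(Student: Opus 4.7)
The plan is to mirror the proof of Lemma \ref{lem:II}, except that since $\mathcal{P}_{12}E_{12}$ is supported only on the four mesh rectangles meeting at the transition point $(x_{N/2-1},y_{N/2-1})$, the natural split is over $\tau_{N/2-1,N/2-1}$ versus its complement:
\begin{equation*}
\mr{III}=\int_{\Omega\setminus\tau_{N/2-1,N/2-1}}(\pi_{12}E_{12}-E_{12})\,\b{b}\cdot\nabla\chi\,\mr{d}x\mr{d}y+\int_{\tau_{N/2-1,N/2-1}}(\pi_{12}E_{12}-E_{12})\,\b{b}\cdot\nabla\chi\,\mr{d}x\mr{d}y.
\end{equation*}
On $\tau_{N/2-1,N/2-1}$ all $\mathcal{P}_{12}E_{12}$ degrees of freedom act, and I would use that $E_{12}^I-\mathcal{P}_{12}E_{12}$ reduces to the partial Lagrange interpolant $\mathcal{F}_2^{12}$ built from node values on $\{x=x_{N/2}\}\cup\{y=y_{N/2}\}$. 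On $\Omega\setminus\tau_{N/2-1,N/2-1}$, the interior hat functions of $\mathcal{P}_{12}E_{12}$ vanish and only the edge/corner contribution $\mathcal{F}_1^{12}$ (hat functions with $s=0$ or $t=0$) survives, so $\pi_{12}E_{12}-E_{12}$ equals $(E_{12}^I-E_{12})-\mathcal{F}_1^{12}$ there.

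For the integral over $\Omega\setminus\tau_{N/2-1,N/2-1}$, I would further split off the two summands. Cauchy--Schwarz with the first bound of Lemma \ref{lem:interpolation-error-2} and the routine $\Vert\nabla\chi\Vert\le\varepsilon^{-1/2}\Vert\chi\Vert_\varepsilon$ handles $E_{12}^I-E_{12}$; Assumption \ref{assumption-2} absorbs the $\varepsilon^{-1/2}$. For $\mathcal{F}_1^{12}$ I would combine the pointwise bound $|E_{12}(x_{N/2-1}^s,y_{N/2-1}^t)|\le CN^{-2\sigma}$ (from \eqref{eq:mesh-6}) with the observation that the three off-corner support elements have total area at most $C\varepsilon N^{-1}$; the resulting $\varepsilon^{1/2}$ factor in the $L^2$ norm cancels the $\varepsilon^{-1/2}$ from $\Vert\nabla\chi\Vert$, and $\sigma\ge k+1$ then makes the leftover $N^{-2\sigma-1/2}$ smaller than $N^{-(k+1/2)}$.

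For the integral over $\tau_{N/2-1,N/2-1}$ I would bound $\Vert\mathcal{F}_2^{12}\Vert_{\tau_{N/2-1,N/2-1}}$ and $\Vert E_{12}\Vert_{\tau_{N/2-1,N/2-1}}$ separately. The nodes defining $\mathcal{F}_2^{12}$ lie on $\{x=x_{N/2}\}\cup\{y=y_{N/2}\}$, where \eqref{eq:(2.1e)} combined with $x_{N/2},y_{N/2}\ge C\sigma\varepsilon|\ln\varepsilon|$ gives $|E_{12}|\le C\varepsilon^\sigma$, yielding $\Vert\mathcal{F}_2^{12}\Vert_\tau\le C\varepsilon^\sigma N^{-1}$. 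A direct integration of the exponential bound on $E_{12}$ over the corner element yields $\Vert E_{12}\Vert_\tau\le C\varepsilon N^{-2\sigma}$. Multiplying each by $\varepsilon^{-1/2}\Vert\chi\Vert_\varepsilon$ and invoking $\varepsilon\le N^{-1}$ with $\sigma\ge k+1$ gives the $CN^{-(k+1/2)}\Vert\chi\Vert_\varepsilon$ target.

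The chief obstacle is identifying the correct local decomposition of $\pi_{12}E_{12}-E_{12}$ on each region, and then keeping one power of $\varepsilon^{1/2}$ in reserve in every sub-estimate: either from the small area of the $\mathcal{F}_1^{12}$ support outside $\tau_{N/2-1,N/2-1}$, or from the extra $\varepsilon$ gained by integrating the exponential inside $\tau_{N/2-1,N/2-1}$. That factor is precisely what cancels the $\varepsilon^{-1/2}$ cost of bounding $\Vert\nabla\chi\Vert$ through the energy norm; without it the target $N^{-(k+1/2)}$ rate would be lost for $\varepsilon\ll N^{-1}$.
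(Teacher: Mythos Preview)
Your proposal is correct and follows essentially the same route as the paper: the same split over $\tau_{N/2-1,N/2-1}$ versus its complement, the same identification of $\pi_{12}E_{12}$ with the partial interpolant built from the nodes on $\{x=x_{N/2}\}\cup\{y=y_{N/2}\}$ on the corner element (the paper's $\mathcal{F}_4$) and with $E_{12}^I$ minus the edge/corner hat-function part (the paper's $\mathcal{F}_3$) on the complement, and the same bookkeeping of an $\varepsilon^{1/2}$ factor to absorb the $\varepsilon^{-1/2}$ from $\Vert\nabla\chi\Vert$. The only cosmetic difference is that the paper uses the slightly sharper pointwise bound $|E_{12}|\le C\varepsilon^{\sigma}N^{-\sigma}$ on the $\mathcal{F}_4$ nodes (since at least one coordinate is at index $N/2$ and the other is at least at index $N/2-1$), whereas you use $|E_{12}|\le C\varepsilon^{\sigma}$; your weaker bound still suffices.
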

\begin{proof}
Set $D_3=:\Omega\setminus \tau_{N/2-1,N/2-1}$. According to \eqref{eq:interpolant-E-1}, one has
\begin{equation}\label{eq:III-1}
\begin{aligned}
 &\int_{\Omega} (\pi_{12} E_{12}-E_{12})\; \b{b}\cdot\nabla\chi\mr{d}x\mr{d}y \\
 =& 
 \left( \pi_{12} E_{12}-E_{12},\b{b}\cdot\nabla\chi\right)_{ D_3 }
 +
  \left(  \pi_{12} E_{12}-E_{12},\b{b}\cdot\nabla\chi\right)_{\tau_{N/2-1,N/2-1} } \\
=&
 \left( E_{12}^I-E_{12},\b{b}\cdot\nabla\chi\right)_{D_3 }
 -\left(\mathcal{F}_3,\b{b}\cdot\nabla\chi\right)_{ D_3 } +
  \left(  \mathcal{F}_4-E_{12},\b{b}\cdot\nabla\chi\right)_{\tau_{N/2-1,N/2-1} } 
\\
 =:&\mr{Q}_1+\mr{Q}_2+\mr{Q}_3,
\end{aligned}
\end{equation}
where  $(\pi_{12} E_{12})|_{ D_3 } =(E_{12}^I-\mathcal{F}_3)|_{D_3 } $, $(\pi_{12} E_{12})|_{ \tau_{N/2-1,N/2-1} }=\mathcal{F}_4|_{ \tau_{N/2-1,N/2-1} }$ and
\begin{align*}
\mathcal{F}_3:=&   \sum_{t=1}^{k-1} E_{12}(x_{N/2-1},y_{N/2-1}^t)\theta_{N/2-1,N/2-1}^{0,t}+E_{12}(x_{N/2-1},y_{N/2-1})\theta_{N/2-1,N/2-1}^{0,0}\\
&+\sum_{s=1}^{k-1} E_{12}(x_{N/2-1}^s,y_{N/2-1})\theta_{N/2-1,N/2-1}^{s,0},\\
\mathcal{F}_4:=&  \sum_{t=0}^{k-1} E_{12}(x_{N/2},y_{N/2-1}^t)\theta_{N/2,N/2-1}^{0,t}
+E_{12}(x_{N/2},y_{N/2})\theta_{N/2,N/2}^{0,0}\\
&+\sum_{s=0}^{k-1} E_{12}(x_{N/2-1}^s,y_{N/2})\theta_{N/2-1,N/2}^{s,0}.
\end{align*}

 From Lemma \ref{lem:interpolation-error-2} and the Cauchy-Schwarz inequality, one has
\begin{equation}\label{eq:III-2}
\begin{aligned}
|\mr{Q}_1|\le C \Vert  E_{12}^I-E_{12} \Vert_{ D_3 } \Vert \nabla\chi \Vert_{ D_3 } 
\le  C \varepsilon^{1/2}N^{-(k+1/2)} \Vert \nabla\chi \Vert
\le C N^{-(k+1/2)} \Vert  \chi \Vert_{\varepsilon}.
\end{aligned}
\end{equation}

Note $|E_{12}(x_{N/2-1}^s,y_{N/2-1}^t)|\le CN^{-2\sigma}$ for any $0\le s,t\le k-1$
and $h_{N/2-2,x},h_{N/2-2,y}\le C\varepsilon$. Then one has
\begin{align*}
\Vert \mathcal{F}_3 \Vert_{D_3}\le &
CN^{-2\sigma}\left( \sum_{t=1}^{k-1} \Vert \theta_{N/2-1,N/2-1}^{0,t} \Vert_{D_3}+\Vert  \theta_{N/2-1,N/2-1}^{0,0} \Vert_{D_3}
+\sum_{s=1}^{k-1} \Vert \theta_{N/2-1,N/2-1}^{s,0} \Vert_{D_3}  \right)\\
\le &
CN^{-2\sigma}\left( h_{N/2-1,x}^{1/2}h_{N/2-2,y}^{1/2}+h_{N/2-2,x}^{1/2}h_{N/2-2,y}^{1/2} 
+h_{N/2-2,x}^{1/2}h_{N/2-1,y}^{1/2}  \right)\\
\le &
C\varepsilon^{1/2}N^{-1/2-2\sigma}.
\end{align*}
Thus the Cauchy-Schwarz inequality yields
\begin{equation}\label{eq:III-3}
\begin{aligned} 
&|\mr{Q}_2|\le C   \Vert \mathcal{F}_3 \Vert_{D_3} \Vert \nabla \chi \Vert_{D_3}
\le
C N^{-1/2-2\sigma}\Vert \chi \Vert_{\varepsilon}.
\end{aligned}
\end{equation}

 Note $|E_{12}(x_{N/2}^s,y_{N/2-1}^t)|+|E_{12}(x_{N/2-1}^s,y_{N/2}^t)|+|E_{12}(x_{N/2},y_{N/2})|\le C\varepsilon^{\sigma}N^{-\sigma}$ for any $0\le s,t \le k-1$. The Cauchy-Schwarz inequality yields
\begin{equation}\label{eq:III-4}
\begin{aligned}
|\mr{Q}_3|\le &C  (\Vert \mathcal{F}_4  \Vert_{\tau_{N/2-1,N/2-1}}
+\Vert E_{12}  \Vert_{ \tau_{N/2-1,N/2-1} } ) \Vert \nabla \chi  \Vert_{ \tau_{N/2-1,N/2-1} }\\
\le & C
 \left(\varepsilon^{\sigma}N^{-\sigma}h^{1/2}_{N/2-1,x}h^{1/2}_{N/2-1,y}+\varepsilon N^{-2\sigma} 
\right)
\Vert \nabla \chi \Vert_{ \tau_{N/2-1,N/2-1} }\\
\le &
C    \varepsilon^{1/2} N^{-2\sigma} \Vert  \chi \Vert_{\varepsilon}.
\end{aligned}
\end{equation}

Substituting \eqref{eq:III-2}--\eqref{eq:III-4} into \eqref{eq:III-1}, we are done. 
\end{proof}

\begin{lemma}\label{lem:VII}
Let Assumptions \ref{assumption:1} and \ref{assumption-2} hold true. Let $\mathcal{B}_i E_i$ with $i=1,2$ be defined in \eqref{eq:interpolant-E-1}. Then one has
\begin{equation}\label{eq:VII}
|\mr{VII}|=\left|\sum_{i=1,2} \int_{\Omega} ( -\mathcal{B}_i E_i)\; \b{b}\cdot\nabla\chi\mr{d}x\mr{d}y \right|\le CN^{-k}  R(N,\varepsilon)\Vert \chi \Vert_{\varepsilon},
\end{equation}
where $R(N,\varepsilon)=N^{-3/2}| \ln(\varepsilon N) |^{1/2}$.
\end{lemma}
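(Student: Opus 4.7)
The plan is to exploit the $x$-$y$ symmetry of the construction: the contribution from $\mathcal{B}_2 E_2$ is analyzed exactly as that from $\mathcal{B}_1 E_1$, so I will sketch only the latter. First, I would note that $\mathcal{B}_1 E_1$ is supported in two narrow horizontal strips, one adjacent to $y=0$ (of $y$-width $h_{0,y}\le C\varepsilon N^{-1}$) and one adjacent to $y=1$ (of $y$-width $h_{N-1,y}\le CN^{-1}$), each confined to $x\in[x_{N/2-2},x_{N/2}]$. By \eqref{eq:(2.1c)} together with \eqref{eq:mesh-6}, every nodal coefficient of $\mathcal{B}_1 E_1$ is bounded by $CN^{-\sigma}$.

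Next, I would split the integral over these two strips and estimate each by Cauchy-Schwarz. Using the hat-function $L^2$ bounds and Lemma \ref{lem:Bakhvlov-mesh} one gets, after accounting for the anisotropic cell measures, $\Vert\mathcal{B}_1 E_1\Vert_{\mathrm{bot}}\le C\varepsilon^{1/2}N^{-1-\sigma}$ and $\Vert\mathcal{B}_1 E_1\Vert_{\mathrm{top}}\le CN^{-1-\sigma}$. For the bottom strip, pairing with $\Vert\nabla\chi\Vert\le\varepsilon^{-1/2}\Vert\chi\Vert_{\varepsilon}$ produces a $\varepsilon^{1/2}\cdot\varepsilon^{-1/2}$ cancellation and, since $\sigma\ge k+1$, yields an $O(N^{-k-2})\Vert\chi\Vert_{\varepsilon}$ contribution that is easily absorbed.

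For the top strip the $\varepsilon^{1/2}$ gain is absent and crude Cauchy-Schwarz no longer suffices. Here I would switch to an element-wise integration by parts, using $\chi=0$ on $\partial\Omega$ to kill boundary terms:
\[
-\int_{\mathrm{top}}\mathcal{B}_1 E_1\,\b{b}\cdot\nabla\chi
=\int_{\mathrm{top}}\chi\,\nabla\cdot(\b{b}\,\mathcal{B}_1 E_1),
\]
and then bound the right-hand side by $C(\Vert\nabla(\mathcal{B}_1 E_1)\Vert_{\mathrm{top}}+\Vert\mathcal{B}_1 E_1\Vert_{\mathrm{top}})\Vert\chi\Vert_{\mathrm{top}}$. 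The factor $\Vert\chi\Vert_{\mathrm{top}}$ is controlled by the Friedrichs inequality $\Vert\chi\Vert_{\mathrm{top}}\le Ch_{N-1,y}\Vert\chi_y\Vert$ (using $\chi(x,1)=0$), while $\Vert\nabla(\mathcal{B}_1 E_1)\Vert_{\mathrm{top}}$ is handled by a standard inverse estimate on the top-row cells.

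The hard part, as I expect, is obtaining the precise factor $R(N,\varepsilon)=N^{-3/2}|\ln(\varepsilon N)|^{1/2}$. A naive combination of the above ingredients leaves a stray $\varepsilon^{-1/2}$ coming from $\Vert\chi_y\Vert$ that must be traded for $|\ln(\varepsilon N)|^{1/2}$ through a sharper, mesh-aware discrete inequality. I anticipate that this trade requires a careful cell-by-cell summation across the top strip that uses the Bakhvalov grading in \eqref{eq:mesh-2}--\eqref{eq:mesh-4} together with the logarithmic width identity $x_{N/2}\ge C\sigma\varepsilon|\ln\varepsilon|$ from \eqref{eq:mesh-6}, perhaps combined with a discrete Sobolev-type embedding for $V^N$-functions vanishing on $\partial\Omega$. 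Making this quantitative, so as to extract exactly the stated algebraic-logarithmic factor rather than the crude $N^{-k-1}$ bound, is the crux of the argument.
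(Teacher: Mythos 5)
Your setup (the two thin strips supporting $\mathcal{B}_1E_1$, the nodal bound $CN^{-\sigma}$, and the bottom-strip estimate) is consistent with the paper, but there is a genuine gap in your treatment of the top strip, and you concede it yourself: you never actually produce the factor $N^{-3/2}|\ln(\varepsilon N)|^{1/2}$. The missing idea is elementary and renders your integration-by-parts machinery unnecessary. From \eqref{B-mesh-1},
\[
h_{N/2-1,x}=\frac{\sigma\varepsilon}{\beta_1}\,\ln\Bigl(\frac{2/N+\varepsilon(1-2/N)}{\varepsilon}\Bigr)\le C\varepsilon\,|\ln(\varepsilon N)|,
\qquad h_{N/2-2,x}\le\sigma\varepsilon ,
\]
so the $x$-extent of the support of $\mathcal{B}_1E_1$ carries a factor $\varepsilon$; you instead used the worst-case bound $h_{N/2-1,x}\le 2\sigma N^{-1}$ from \eqref{eq:mesh-4}, which throws this away. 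Keeping it, the top strip gives $\Vert\mathcal{B}_1E_1\Vert_{\mathrm{top}}\le CN^{-\sigma}\bigl(\varepsilon|\ln(\varepsilon N)|\,h_{N-1,y}\bigr)^{1/2}\le CN^{-\sigma-1/2}\varepsilon^{1/2}|\ln(\varepsilon N)|^{1/2}$, and plain Cauchy--Schwarz with $\Vert\nabla\chi\Vert\le\varepsilon^{-1/2}\Vert\chi\Vert_{\varepsilon}$ already yields $CN^{-\sigma-1/2}|\ln(\varepsilon N)|^{1/2}\Vert\chi\Vert_{\varepsilon}\le CN^{-k}R(N,\varepsilon)\Vert\chi\Vert_{\varepsilon}$ since $\sigma\ge k+1$. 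This is exactly the paper's proof, phrased there as an $L^\infty$--$L^1$ H\"older estimate over the cells $\tau_{N/2-1,j}$, $j=0,N-1$: the $\varepsilon^{1/2}$ from the $x$-width cancels the $\varepsilon^{-1/2}$ from the gradient, and the logarithm is precisely the price of that cancellation.

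Moreover, the alternative you sketch for the top strip would not close. After integrating by parts you must control $\Vert\partial_x(\mathcal{B}_1E_1)\Vert_{\mathrm{top}}$, and an inverse estimate on cells whose $x$-width is only $O(\varepsilon|\ln(\varepsilon N)|)$ (indeed $h_{N/2-2,x}\le\sigma\varepsilon$) costs a factor of order $\varepsilon^{-1}$; paired with $\Vert\chi\Vert_{\mathrm{top}}\le Ch_{N-1,y}\Vert\chi_y\Vert\le CN^{-1}\varepsilon^{-1/2}\Vert\chi\Vert_{\varepsilon}$ this leaves an uncancelled negative power of $\varepsilon$, which is \emph{worse} than the crude Cauchy--Schwarz bound you were trying to improve. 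So the ``sharper, mesh-aware discrete inequality'' you defer to is neither needed nor obtainable along that route; the direct estimate above is the whole argument.
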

\begin{proof}
We just present the analysis on the term involved with $\mathcal{B}_1E_1$ and the other could be analyzed in a similar way.  H\"{o}lder inequalities 
\begin{equation*}
\begin{aligned}
&\left|  \int_{\Omega} ( -\mathcal{B}_1 E_1)\; \b{b}\cdot\nabla\chi\mr{d}x\mr{d}y \right|\\
\le &\sum_{j=0,N-1}\Vert \mathcal{B}_1 E_1 \Vert_{ \infty,\tau_{N/2-1,j} }  \Vert \b{b}\cdot\nabla\chi \Vert_{ 1,\tau_{N/2-1,j} }
\\
\le &C \sum_{j=0,N-1} \left( N^{-\sigma} h_{N/2-1,x}^{1/2} h_{j,y}^{1/2} \Vert \b{b}\cdot\nabla\chi \Vert_{ \tau_{N/2-1,j} } \right)\\
\le &
C (N^{-(1+\sigma)}+N^{-(1/2+\sigma)} | \ln(\varepsilon N) |^{1/2} ) \Vert \chi \Vert_{\varepsilon }\\
\le &
C N^{-(1/2+\sigma)} | \ln(\varepsilon N) |^{1/2} \Vert \chi \Vert_{\varepsilon },
\end{aligned}
\end{equation*}
where we have used $h_{0,y}\le C \varepsilon N^{-1}$ in Lemma \ref {lem:Bakhvlov-mesh} and $h_{N/2-1,x}=-\frac{\sigma \varepsilon}{\beta} \ln (\varepsilon N)$ from \eqref{B-mesh-1}.

\end{proof}
\begin{remark}
Practically
$R(N,\varepsilon)$ is bounded: If we assume $N\ge 10$ and $\varepsilon \ge 10^{-1001}$,
then
$$
R(N,\varepsilon)\le \sqrt{\ln 10}.
$$
\end{remark}

Now we are in a position to present  the main result.  
\begin{theorem}\label{the:main result}
Let Assumptions \ref{assumption:1} and \ref{assumption-2} hold true. Let $u$ and $u^N$ be the solutions of \eqref{eq:model problem} and \eqref{eq: FE}, respectively.  Then one has
\begin{align*}
 \Vert u-u^N \Vert_{\varepsilon}\le & CN^{-k}R(N,\varepsilon),
\end{align*}
where $R(N,\varepsilon)=N^{-3/2}| \ln(\varepsilon N) |^{1/2}$.
\end{theorem}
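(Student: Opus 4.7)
The plan is to assemble the pieces that are already in place. All the heavy lifting has been done in Lemmas \ref{lem:interpolation-error-1}, \ref{lem:interpolation-error-2}, \ref{lem:interpolation-error-3}, \ref{lem:II}, \ref{lem:III}, and \ref{lem:VII}; at the level of the theorem itself the proof is a bookkeeping exercise on the seven-term identity \eqref{eq:uniform-convergence-1}, which was itself obtained from coercivity \eqref{eq:coercivity}, Galerkin orthogonality \eqref{eq:Galerkin orthogonality}, the decomposition $u = S + E_1 + E_2 + E_{12}$, the definition \eqref{eq:interpolant-u}--\eqref{eq:B1+B2} of $\Pi u$, and an integration by parts that moves $\nabla$ off the smooth interpolation error $S^I - S$ in the convective pairing.

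Concretely, the step-by-step plan is the following: (i) bound $\mr{I}+\mr{IV}+\mr{V}+\mr{VI}$ by $CN^{-k}\Vert\chi\Vert_\varepsilon$ using \eqref{eq:I} (Cauchy--Schwarz plus Lemma \ref{lem:interpolation-error-3}); (ii) bound $\mr{II}$ by Lemma \ref{lem:II} and $\mr{III}$ by Lemma \ref{lem:III}, each contributing $CN^{-(k+1/2)}\Vert\chi\Vert_\varepsilon$; (iii) bound $\mr{VII}$ by Lemma \ref{lem:VII}, contributing $CN^{-k}R(N,\varepsilon)\Vert\chi\Vert_\varepsilon$; (iv) substitute these four estimates into \eqref{eq:uniform-convergence-1}, divide both sides by $\Vert\chi\Vert_\varepsilon$, and collect to obtain the energy-norm estimate on $\chi$ of the form $\Vert\chi\Vert_\varepsilon \le C N^{-k}R(N,\varepsilon)$ after subsuming the lower-order contributions into the dominating envelope; (v) conclude with the triangle inequality
\[
\Vert u - u^N \Vert_\varepsilon \le \Vert u - \Pi u\Vert_\varepsilon + \Vert\chi\Vert_\varepsilon
\]
together with $\Vert u - \Pi u\Vert_\varepsilon \le CN^{-k}$ from Lemma \ref{lem:interpolation-error-3}.

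At no step does a new technical difficulty arise beyond what the lemmas already handle. If pressed to name the main obstacle that has driven the whole framework, it is already encoded in Lemmas \ref{lem:II}, \ref{lem:III}, \ref{lem:VII}: a naive Cauchy--Schwarz on the convective pairings would use $\Vert\nabla\chi\Vert \le \varepsilon^{-1/2}\Vert\chi\Vert_\varepsilon$, producing a fatal $\varepsilon^{-1/2}$ loss. The cure is the engineered subtraction of $\mathcal{P}_iE_i$ (and $\mathcal{B}_iE_i$ at the boundary) in the definition of $\Pi u$, which makes the modified layer interpolants vanish at the critical transition column/row and so tames the corresponding $L^2$ factor. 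The only residual blemish is the logarithmic factor in $R(N,\varepsilon)$, which enters via the boundary-correction pairing $(-\mathcal{B}_iE_i,\b{b}\cdot\nabla\chi)$ bounded by an $L^{\infty}$--$L^1$ H\"older estimate on a cell of width $h_{N/2-1,x}\sim \varepsilon|\ln(\varepsilon N)|$; this is the source of the ``almost'' in the ``almost uniform'' convergence claim, and is what one should expect to be unavoidable within this analysis.
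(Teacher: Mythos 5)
Your proposal is correct and follows essentially the same route as the paper's own proof: substitute the bounds \eqref{eq:I}, \eqref{eq:II}, \eqref{eq:III} and \eqref{eq:VII} into \eqref{eq:uniform-convergence-1} to obtain $\Vert \Pi u-u^N\Vert_{\varepsilon}\le CN^{-k}R(N,\varepsilon)$, then conclude by the triangle inequality together with $\Vert u-\Pi u\Vert_{\varepsilon}\le CN^{-k}$ from Lemma \ref{lem:interpolation-error-3}. (Both your step (iv)--(v) and the paper tacitly absorb the $CN^{-k}$ contributions into the envelope $CN^{-k}R(N,\varepsilon)$, which strictly speaking requires $R(N,\varepsilon)\ge c>0$; this is a blemish shared with the paper's statement, not a defect particular to your argument.)
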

\begin{proof}
Substituting \eqref{eq:I}, \eqref{eq:II}, \eqref{eq:III} and \eqref{eq:VII}  into \eqref{eq:uniform-convergence-1}, we obtain   $\Vert \Pi u-u^N \Vert_{\varepsilon}\le C N^{-k}R(N,\varepsilon)$.
From a triangle  inequality and  Lemma \ref{lem:interpolation-error-3}, one has
$$
\Vert u-u^N \Vert_{\varepsilon}
\le   \Vert u-\Pi u \Vert_{\varepsilon}+\Vert \Pi u-u^N \Vert_{\varepsilon}\le  
 C N^{-k}R(N,\varepsilon).
$$
Thus we are done.
\end{proof}

\begin{remark}
In Theorem \ref{the:main result}, we show the almost optimal estimation, which is slightly better than the bound in \cite[Theorem 1]{Roos1Scho2:2012-Analysis}.
\end{remark}
 
%
%
%
\section{Numerical Experiments}\label{sec:numerical experiments}
In this section we present numerical experiments that support our theoretical results.  All calculations were carried out using Intel Visual Fortran 11 and the discrete problems
were solved by the nonsymmetric iterative solver GMRES; see, e.g., \cite{Ben1Gol2Lie3:2005-Numerical}.

For our experiments we used the boundary value problem
\begin{align*}
-\varepsilon\Delta u-(2+2x-y)u_{x}-(3-x+2y)u_{y}+u&=f(x,y)\quad&&\text{in $\Omega=(0,1)^{2}$},\\
u&=0 \quad&&\text{on $\partial\Omega$,}
\end{align*}
where the right-hand side  $f$ is chosen such that
\begin{equation*}
u(x,y)=2\sin (\pi x)\left(1-e^{-\frac{2x}{\varepsilon} } \right)
(1-y)^{2} \left( 1-e^{-\frac{y}{\varepsilon} } \right)
\end{equation*}
is the exact solution. This solution exhibits typical exponential layer behaviour as described in Assumption~\ref{assumption:1}.

Numerical results are presented in Tables \ref{table:alex-1} and \ref{table:alex-2} and Figure  \ref{fig:comparison}, which support our  main result. Tables \ref{table:alex-1} and \ref{table:alex-2}  list   errors in the energy norm, i.e., $\Vert u-u^N \Vert_{\varepsilon}$,  for $\varepsilon=10^{-4},10^{-5},\ldots, 10^{-8}$ and $N=8,16,32,64,128,256$, in the cases of $k=1, 2$. These data show  uniform convergence with respect to the singular perturbation parameter $\varepsilon$. 
In the cases of  $k=3,4$, errors and convergence orders also 
show  uniform convergence, which are  plotted in Figure \ref{fig:comparison}.

Here we also compare two meshes, which are Bakhvalov-type mesh \eqref{B-mesh-1} (denoted by B-mesh) and  Bakhvalov-Shishkin mesh (denoted by B-S-mesh).    B-S-mesh    \cite{Linb:2000-Analysis}  yields an optimal convergence order for bilinear element and has excellent performances. According to  \cite{Linb:2000-Analysis},
B-S-mesh is defined by tensor product and  the mesh points in $x-$direction are defined by
\begin{equation*}
\varphi(i/N)=
\left\{
\begin{split}
&-\frac{(k+1)\varepsilon}{\beta_1} \ln \left( \frac{N^2-2i(N-1)}{N^2}\right)\quad &&\text{for $i=0,\ldots,N/2$},\\
&\varphi(1/2)+2(1-\varphi(1/2))/(i/N-1/2)\quad &&\text{for $i=N/2+1,\ldots,N$}.
\end{split}
\right.
\end{equation*}
The mesh points along $y-$direction can be  defined similarly. 

%
%

To compare B-mesh and B-S-mesh, we plot
 errors in the energy norm on these meshes for $\varepsilon=10^{-8}$ and $N=8,16,32,64,128$, for the cases $k=1,2,3,4$  on log-log chart in Figure  \ref{fig:comparison}. We can see that these meshes have similar performances  and B-S-mesh  yields slightly smaller errors.

Besides, from numerical experiments we find that the linear systems become harder to be solved by iterative solvers when $k$ and $N$ become  bigger and $\varepsilon$ becomes smaller.

\begin{table}[h]
\caption{Errors and orders  in the energy norm on B-mesh  for $k=1$}
\footnotesize
\begin{tabular*}{\textwidth}{@{\extracolsep{\fill}} c | cccccc  }
\hline
 \diagbox{$\varepsilon$}{$N$}   & 8 & 16 & 32 & 64 &128&256 \\
  \hline
 $10^{-4}$ & 0.227E+0   & 0.109E+0    & 0.540E-1& 0.269E-1&0.135E-1&0.673E-2  \\
             $10^{-4}$ & 1.06   & 1.02& 1.00&1.00&1.00         &---   \\
            $10^{-5}$ & 0.228E+0   & 0.109E+0    & 0.540E-1& 0.269E-1&0.135E-1&0.673E-2  \\
             $10^{-5}$ & 1.06   & 1.02& 1.00&1.00&1.00          &---\\
            $10^{-6}$ & 0.229E+0   & 0.109E+0    & 0.540E-1& 0.269E-1&0.135E-1&0.673E-2  \\
             $10^{-6}$ & 1.07   & 1.02& 1.00&1.00&1.00         &---\\
            $10^{-7}$ & 0.231E+0   & 0.110E+0    & 0.541E-1& 0.269E-1&0.135E-1&0.673E-2  \\
             $10^{-7}$ & 1.08  & 1.02& 1.00&1.00&1.00          &---\\
            $10^{-8}$ & 0.234E+0   & 0.110E+0    & 0.541E-1& 0.269E-1&0.135E-1&0.673E-2  \\
             $10^{-8}$ & 1.09  & 1.02& 1.01&1.00&1.00         &---\\
\hline
\end{tabular*}
\label{table:alex-1}
\end{table}

\begin{table}[h]
\caption{Errors and orders  in the energy norm on B-mesh  for $k=2$}
\footnotesize
\begin{tabular*}{\textwidth}{@{\extracolsep{\fill}} c | ccccc    }
\hline
 \diagbox{$\varepsilon$}{$N$}    & 8 & 16 & 32 & 64 &128  \\
  \hline
 $10^{-4}$ & 0.502E-1  & 0.105E-1    & 0.247E-2& 0.603E-3&0.150E-3 \\
             $10^{-4}$ & 2.25   & 2.10& 2.03&2.01  &---     \\
 $10^{-5}$ & 0.501E-1  & 0.105E-1    & 0.247E-2& 0.604E-3&0.150E-3  \\
             $10^{-5}$ & 2.25   & 2.09& 2.03&2.01    &---     \\
 $ 10^{-6}$ & 0.500E-1  & 0.105E-1    & 0.246E-2& 0.604E-3&0.150E-3  \\
             $10^{-6}$ & 2.25   & 2.09& 2.03&2.01   &---     \\
 $10^{-7}$ & 0.500E-1  & 0.105E-1    & 0.246E-2& 0.604E-3&0.150E-3  \\
             $10^{-7}$ & 2.25   & 2.09& 2.03&2.01   &---     \\
 $10^{-8}$ & 0.502E-1  & 0.105E-1    & 0.246E-2& 0.604E-3&0.150E-3   \\
             $10^{-8}$ & 2.25   & 2.10& 2.03&2.01   &---     \\
\hline
\end{tabular*}
\label{table:alex-2}
\end{table}

  \begin{figure}[htbp]
\centering
\includegraphics[width=5in]{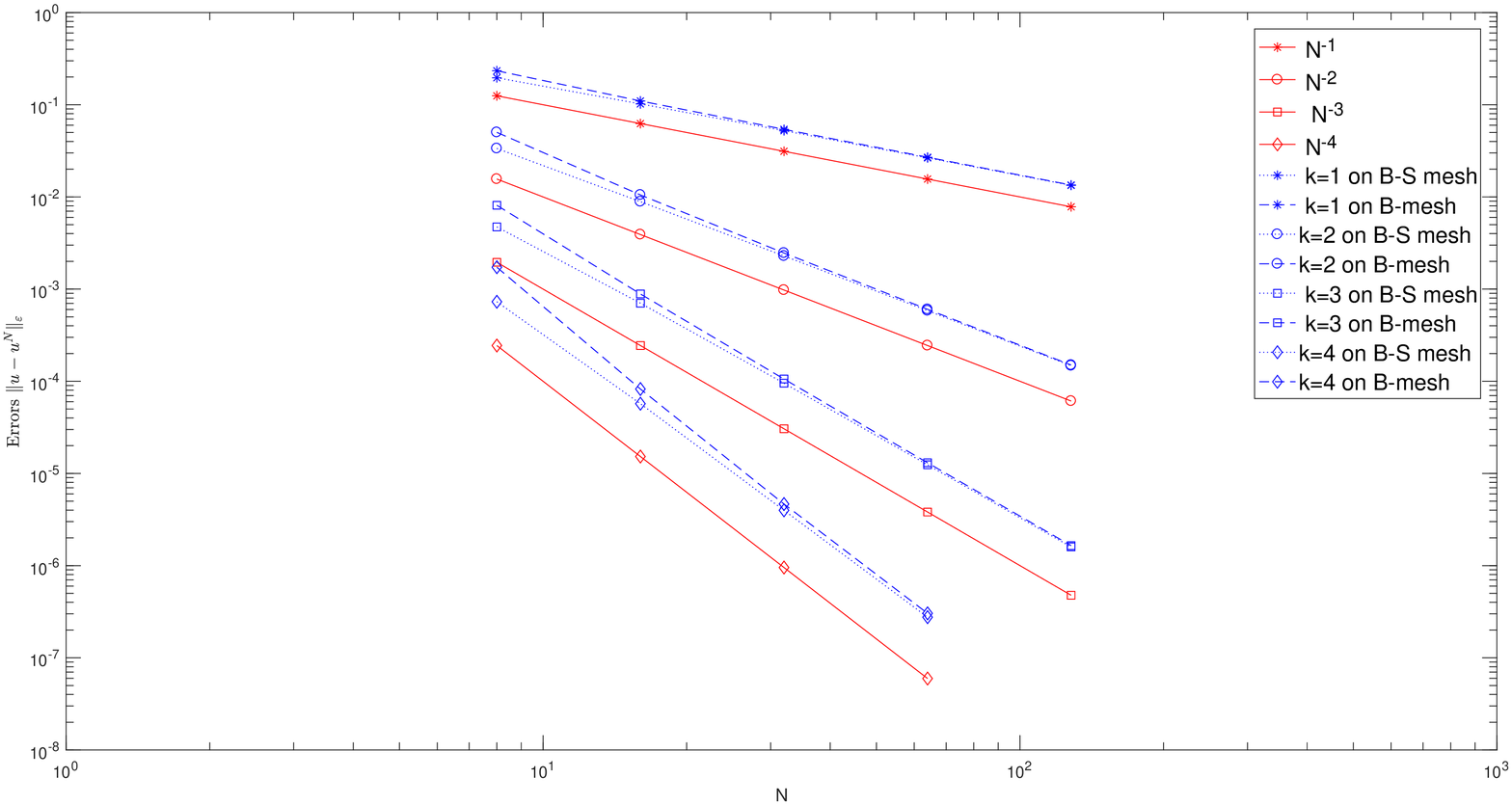}
\caption{Error:~energy norm, $\varepsilon=10^{-8}$.}
\label{fig:comparison}
\end{figure}

\section{Bibliography}
 \bibliographystyle{plain}

\end{document}